\newtheorem{theorem}{Theorem}[section]
\newtheorem{lemma}[theorem]{Lemma}
\newtheorem{corollary}[theorem]{Corollary}
\theoremstyle{definition}
\theoremstyle{remark}
\newtheorem{remark}[theorem]{Remark}
\numberwithin{equation}{section}
\begin{document}
\setcounter{page}{1}

\title[Operator Kantorovich and Wielandt inequalities]{Improved operator Kantorovich and Wielandt inequalities for positive linear maps}

\author[W.S. Liao, J.L. Wu]{Wenshi Liao$^1$$^{*}$ and Junliang Wu$^2$}

\address{$^{1}$ College of Mathematics and Statistics, Chongqing University, Chongqing, 401331, P.R. China.}
\email{\textcolor[rgb]{0.00,0.00,0.84}{liaowenshi@gmail.com}}

\address{$^{2}$ College of Mathematics and Statistics, Chongqing University, Chongqing, 401331, P.R. China.}
\email{\textcolor[rgb]{0.00,0.00,0.84}{jlwu678@163.com}}


\subjclass[2010]{Primary 47A63; Secondary 47A30.}

\keywords{Kantorovich inequality, Wielandt inequality, operator inequalities, positive linear maps.}

\date{Received: xxxxxx; Revised: yyyyyy; Accepted: zzzzzz.
\newline \indent $^{*}$ Corresponding author}

\begin{abstract}
In this paper, we improve and generalize the operator versions of Kantorovich 
and Wielandt inequalities for positive linear maps on Hilbert space.
Our results are more extensive and precise than many previous results due to 
Fu and He [Linear Multilinear Algebra, doi: 10. 1080/03081087. 2014. 880432.] 
and Zhang [Banach J. Math. Anal., 9 (2015), no. 1, 166-172.].
\end{abstract} \maketitle

\section{Introduction}

Throughout this paper, we reserve $M$, $m$ for real numbers and $I$ for the identity operator.
Other capital letters denote general elements of the $C^\ast $-algebra $\mathcal{B}(\mathcal{H})$
all bounded linear operators on a complex separable Hilbert space $(\langle \cdot,\cdot\rangle, \mathcal{H})$.
$\|\cdot\|$ denote the operator norm. An operator $A$ is said to be positive (strictly positive) if
$\langle Ax,x\rangle\ge0$ for all $x\in\mathcal{H}$ ($\langle Ax,x\rangle>0$ for all
 $x\in\mathcal{H}\backslash \{0\}$) and write $A\ge0$ ($A>0$).
$A\ge B$ ($A>B$) means $A-B\ge 0$ ($A-B>0$).
The absolute value of $A$ is denoted by $|A|$, that is, $|A|=(A^*A)^{\frac{1}{2}}$.

A linear map $\Phi:\mathcal{B}(\mathcal{H})\rightarrow \mathcal{B}(\mathcal{K})$ is called positive if
$\Phi(A)\ge 0$ whenever $A\ge 0$. It is said to be unital if $\Phi(I)=I$.
 We say that a linear map $\Phi$ between $C^*$-algebra is $2$-positive
if whenever the $2\times 2$ operator matrix
$\left[
  \begin{array}{cc}
    A  & B  \\
    B^* & C \\
  \end{array}
\right]\ge 0,$
then so is
$\left[
  \begin{array}{cc}
    \Phi(A)  & \Phi(B)  \\
    \Phi(B^*) & \Phi(C) \\
  \end{array}
\right]\ge 0.$

In 1948, Kantorovich \cite{Kantorovich} introduced the well-known Kantorovich inequality. In 1990,
 an operator Kantorovich inequality was established by Marshall and Olkin \cite{Marshall}. For recent
 development of the Kantorovich inequality, readers are referred to \cite{Moslehian}. Recently, Lin
 \cite{Lin2} proved that the operator Kantorovich inequality is order preserving under squaring. This result
 was further generalized by several authors (see \cite{Fu1, Zhang}), who obtained

 \begin{theorem}\cite{Fu1, Zhang}
 \label{thm1.1}
Let $0<m\le A\le M$. Then for every positive unital linear map $\Phi$,
\begin{equation}
\label{fu1}
\Phi^p(A^{-1})\le \frac{(m+M)^{2p}}{16m^pM^p}\Phi(A)^{-p},\quad p\ge2,
\end{equation}
\begin{equation}
\label{zhang1}
\Phi^p(A^{-1})\le \frac{(m^2+M^2)^{p}}{16m^pM^p}\Phi(A)^{-p},\quad p\ge4.
\end{equation}
\end{theorem}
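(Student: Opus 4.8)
The plan is to reduce each operator power inequality to a scalar norm estimate and then to bootstrap from a single base exponent to all admissible $p$. Throughout write $X=\Phi(A^{-1})$ and $Y=\Phi(A)$; both are strictly positive, and applying $\Phi$ to $\tfrac1M I\le A^{-1}\le\tfrac1m I$ and $mI\le A\le MI$ gives $\tfrac1M I\le X\le\tfrac1m I$ and $mI\le Y\le MI$, so $\|X\|\le\tfrac1m$ and $\|Y\|\le M$. The engine of the argument is the elementary equivalence, valid for strictly positive $X,Y$ and any $c>0$, $p>0$,
\[
X^p\le cY^{-p}\iff \|X^{p/2}Y^{p/2}\|\le\sqrt c ,
\]
obtained by conjugating with $Y^{p/2}$ and using $\|Z\|^2=\|Z^*Z\|$ for $Z=X^{p/2}Y^{p/2}$. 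This converts an operator power inequality -- for which operator monotonicity is unavailable once $p>1$ -- into a norm bound that interacts well with the submultiplicative splitting $\|X^{p/2}Y^{p/2}\|=\|X^{s}(X^{r}Y^{r})Y^{s}\|\le\|X\|^{s}\|X^{r}Y^{r}\|\|Y\|^{s}$ whenever $r+s=p/2$.

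Next I would record the key operator lemma: since $0<m\le A\le M$ gives $(A-mI)(A-MI)\le0$, hence $A+mMA^{-1}\le(m+M)I$, applying the positive unital map $\Phi$ yields $Y+mMX\le(m+M)I$. For the base case of \eqref{fu1} (namely $p=2$) I combine this with the known norm inequality $\|PQ\|\le\tfrac14\|P+Q\|^2$ for $P,Q\ge0$, applied to $P=mMX$ and $Q=Y$: as $0\le mMX+Y\le(m+M)I$,
\[
mM\,\|XY\|=\|(mMX)Y\|\le\tfrac14\|mMX+Y\|^2\le\tfrac14(m+M)^2 ,
\]
so $\|XY\|\le\frac{(m+M)^2}{4mM}$, which is precisely \eqref{fu1} at $p=2$ by the equivalence above. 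For general $p\ge2$ I peel off one copy of $XY$ with $s=\tfrac p2-1\ge0$,
\[
\|X^{p/2}Y^{p/2}\|\le\|X\|^{s}\|XY\|\|Y\|^{s}\le m^{-s}M^{s}\cdot\frac{(m+M)^2}{4mM} ,
\]
and the target $\|X^{p/2}Y^{p/2}\|\le\frac{(m+M)^p}{4(mM)^{p/2}}$ reduces to the scalar inequality $M^{p-2}\le(m+M)^{p-2}$, valid since $M\le m+M$ and $p\ge2$.

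For \eqref{zhang1} only the base estimate changes, now at $p=4$. I would first sharpen the key lemma to a quadratic bound: from $(Y-mI)(Y-MI)\le0$ one gets $Y^2\le(m+M)Y-mMI$, and, because $mMX\in[mI,MI]$, also $(mMX)^2\le(m+M)(mMX)-mMI$; adding these and invoking $Y+mMX\le(m+M)I$ gives
\[
Y^2+(mMX)^2\le(m+M)^2I-2mM\,I=(m^2+M^2)I .
\]
Applying $\|PQ\|\le\tfrac14\|P+Q\|^2$ with $P=(mMX)^2$, $Q=Y^2$ then gives $m^2M^2\|X^2Y^2\|\le\tfrac14(m^2+M^2)^2$, i.e.\ \eqref{zhang1} at $p=4$. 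Bootstrapping to $p\ge4$ is identical to before, peeling off the block $X^2Y^2$ with $s=\tfrac p2-2\ge0$; the concluding scalar inequality becomes $M^{p-4}\le(m^2+M^2)^{(p-4)/2}$, valid since $M^2\le m^2+M^2$.

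The key operator lemma and the peeling step are routine; the main obstacle is the pair of base-case norm estimates. In particular one must verify the operator inequality $Y^2+(mMX)^2\le(m^2+M^2)I$ and apply the norm-squaring lemma $\|PQ\|\le\tfrac14\|P+Q\|^2$ exactly, since it is this factor $\tfrac14$ that produces the $16$ in both denominators together with the precise powers of $m$ and $M$.
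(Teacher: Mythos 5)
Your proof is correct, and it takes a genuinely different route from the paper's. (Note the paper does not prove Theorem \ref{thm1.1} from scratch as a standalone statement; it obtains \eqref{fu1} and \eqref{zhang1} as the cases $\alpha=1,2$ of its generalization \eqref{th21}.) You share with the paper the reduction of the operator inequality to the norm bound $\|X^{p/2}Y^{p/2}\|\le\sqrt{c}$ (with $X=\Phi(A^{-1})$, $Y=\Phi(A)$), the Bhatia--Kittaneh inequality \eqref{lem1}, and the linear estimate $mMX+Y\le(m+M)I$. You differ in two places. First, to reach general $p$ the paper applies \eqref{lem1} at the full exponent $p/2$ and then pulls the exponent inside the sum via the power-sum norm inequality \eqref{lem2}, $\|P^r+Q^r\|\le\|(P+Q)^r\|$ for $r\ge1$; you instead apply \eqref{lem1} only at the base exponents $p=2$ and $p=4$ and bootstrap by submultiplicative peeling, $\|X^{p/2}Y^{p/2}\|\le\|X\|^{s}\|X^{r}Y^{r}\|\|Y\|^{s}$, closing with the scalar comparisons $M^{p-2}\le(m+M)^{p-2}$ and $M^{p-4}\le(m^2+M^2)^{(p-4)/2}$. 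Second, for the base case of \eqref{zhang1} the paper's route (following Zhang and Lin) rests on the operator inequality $M^{2}m^{2}A^{-2}+A^{2}\le(M^{2}+m^{2})I$ combined with the Choi--Kadison inequality $\Phi(T)^{2}\le\Phi(T^{2})$; you instead obtain $Y^{2}+(mMX)^{2}\le(m^{2}+M^{2})I$ elementarily, by adding the commuting quadratic bounds $(Y-mI)(Y-MI)\le0$ and $(mMX-mI)(mMX-MI)\le0$ and invoking the linear estimate once more, which is in effect a self-contained proof of the inequality (4.7) of \cite{Lin1}. The trade-off: your argument needs neither \eqref{lem2} nor Kadison's inequality, so it is more elementary; but the peeling step is lossy --- the intermediate bound $m^{-s}M^{s}\|X^{r}Y^{r}\|$ is strictly weaker than the target constant and merely happens to stay below it --- and the quadratic trick is tied to the integer exponents $\alpha=1,2$, so your method does not yield the paper's sharper interpolated family \eqref{th21} for all $1\le\alpha\le2$, which the paper's uniform use of \eqref{lem2} and $\Phi^{\alpha}(T)\le\Phi(T^{\alpha})$ delivers with no additional effort.
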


When $p\ge 4$, the inequality \eqref{zhang1} is tighter than \eqref{fu1}.
There exsits a vacancy for $2\le p\le 4$. Motivated by this vacancy, we obtain some better results.

In view of $2$-positive linear map $\Phi$, Lin \cite{Lin2} proved that
\begin{equation}
\label{lin1}
|\Phi(A^{-1})\Phi(A)+\Phi(A)\Phi(A^{-1})|\le \frac{(M+m)^{2}}{2Mm},
\end{equation}
and
\begin{equation}
\label{lin2}
\Phi(A^{-1})\Phi(A)+\Phi(A)\Phi(A^{-1})\le \frac{(M+m)^{2}}{2Mm}.
\end{equation}

Fu \cite{Fu2} generalized the inequalities \eqref{lin1} and \eqref{lin2} to $p$-th power.

When considering the operator inequalities, we can not go without mentioning the operator means.
The axiomatic theory for operator means for pairs of positive operators have been developed by
Kubo and Ando \cite{Kubo}. A binary operation $\sigma$ defined on the set of strictly positive 
operators is called an operator mean provided that

(i) monotonity: $A\le C$ and $B\le D$ imply $A\sigma B\le C\sigma D$;

(ii) transformer inequality: $C^*(A\sigma B)C\le (C^*AC)\sigma (C^*BC)$ for every operator $C$;

(iii) upper continuity: $A_n\downarrow A$ and $B_n\downarrow B$ imply $A_n\sigma B_n\downarrow A\sigma B$,
 where $A_n\downarrow A$ means that $A_1\ge A_2\ge\cdots$ and
$A_n\rightarrow A$ as $n\rightarrow\infty$ in the strong operator topology;

(iv) normalization condition: $I\sigma I=I$.

As a matter of convenience, we use the following
notations to define the weighted arithmetic, geometric and harmonic means for operators:
\[
A\nabla _v B=(1-v)A+v B,~A\# _v B=A^{\frac{1}{2}}(A^{-\frac{1}{2}}BA^{-\frac{1}{2}})^v A^{\frac{1}{2}},~
\]
\[A! _v B=((1-v)A^{-1}+v B^{-1})^{-1},\]
where $A,B>0$ and $v \in [0,1]$. When $v =\frac{1}{2}$, we write  $A\nabla B$, $A\# B$ and $A! B$
for brevity, respectively.
The Young operator inequality proved in \cite{Furuta2} says that if $A,B>0$ and $v \in [0,1]$, then
\[
A\nabla _v B\ge A\# _v B.
\]
In terms of operator means, Hoa \cite{Hoa} obtained the following theorem:
\begin{theorem}\cite{Hoa}
Let $0<m\le A,B \le M$ and $\sigma$, $\tau$ be two arbitrary means
 between arithmetic and harmonic means. Then for every positive unital linear map $\Phi$,
\begin{equation}
\label{hoa1}
\Phi^2(A\sigma B)\le \mathrm{K}^2(h)\Phi^2(A\tau B),
\end{equation}
\begin{equation}
\label{hoa2}
\Phi^2(A\sigma B)\le \mathrm{K}^2(h)(\Phi(A)\tau\Phi(B))^2,
\end{equation}
\begin{equation}
\label{hoa3}
(\Phi(A)\sigma\Phi(B))^2\le \mathrm{K}^2(h)\Phi^2(A\tau B),
\end{equation}
and
\begin{equation}
\label{hoa4}
(\Phi(A)\sigma\Phi(B))^2\le \mathrm{K}^2(h)(\Phi(A)\tau\Phi(B))^2.
\end{equation}
where $\mathrm{K}(h)=\frac{(h+1)^2}{4h}$ with $h=\frac{M}{m}$ is the Kantorovich constant.
\end{theorem}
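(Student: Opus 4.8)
The plan is to prove all four inequalities by a single uniform device, since each has the shape $P^2\le \mathrm{K}^2(h)\,Q^2$ with $P$ one of $\Phi(A\sigma B)$, $\Phi(A)\sigma\Phi(B)$ and $Q$ one of $\Phi(A\tau B)$, $\Phi(A)\tau\Phi(B)$, all of which are strictly positive operators lying in $[mI,MI]$: a mean between the arithmetic and harmonic means of operators in $[m,M]$ again lies in $[m,M]$ (since $mI\le A!B\le A\sigma B\le A\nabla B\le MI$), and a positive unital $\Phi$ preserves these bounds. First I would record the elementary reduction that for strictly positive $P,Q$ and $c>0$,
\[
P^2\le c^2Q^2 \iff \|PQ^{-1}\|\le c,
\]
obtained by conjugating $P^2\le c^2Q^2$ with the invertible $Q^{-1}$ and using $Q^{-1}P^2Q^{-1}=(PQ^{-1})^*(PQ^{-1})$. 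Thus it suffices to bound $\|PQ^{-1}\|$ by $\mathrm{K}(h)=\frac{(M+m)^2}{4Mm}$ in each case.

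To estimate this norm I would invoke the Bhatia--Kittaneh inequality $\|XY\|\le\frac14\|X+Y\|^2$ for positive $X,Y$, applied to $X=P$ and $Y=MmQ^{-1}$, giving
\[
\|PQ^{-1}\|=\frac{1}{Mm}\|P\,(MmQ^{-1})\|\le\frac{1}{4Mm}\|P+MmQ^{-1}\|^2.
\]
The crux is then the single operator estimate $P+MmQ^{-1}\le (M+m)I$, which finishes the argument, since $0\le P+MmQ^{-1}$ forces $\|P+MmQ^{-1}\|\le M+m$ and hence $\|PQ^{-1}\|\le \mathrm{K}(h)$.

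For the key estimate I would use the sandwiching $A!B\le A\sigma B\le A\nabla B$ (valid for any mean between harmonic and arithmetic), the monotonicity and linearity of $\Phi$, and the operator Jensen inequality $\Phi(X)^{-1}\le\Phi(X^{-1})$. In every case $P$ is bounded above by an arithmetic mean and $Q$ below by a harmonic mean, so $Q^{-1}$ is bounded above by the corresponding $\nabla$ of inverses; for instance in \eqref{hoa1} one gets $P\le\Phi(A\nabla B)$ and $Q^{-1}\le\Phi(A!B)^{-1}\le\Phi(A^{-1}\nabla B^{-1})$, whence
\[
P+MmQ^{-1}\le\Phi\!\left(\frac{(A+MmA^{-1})+(B+MmB^{-1})}{2}\right).
\]
The four cases differ only in whether these bounds are taken before or after applying $\Phi$ (using $\Phi(A\nabla B)=\Phi(A)\nabla\Phi(B)$ and $mI\le\Phi(A),\Phi(B)\le MI$ for the cases \eqref{hoa2}, \eqref{hoa3}, \eqref{hoa4}). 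The estimate then closes by the operator inequality $A+MmA^{-1}\le(M+m)I$, which is just $(A-m)(M-A)\ge0$ rearranged, applied to each of $A,B$ (or to $\Phi(A),\Phi(B)$).

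The main obstacle is the squaring: an operator inequality $P\le cQ$ does \emph{not} imply $P^2\le c^2Q^2$, so one cannot simply square the known linear Kantorovich-type bounds. The device overcoming this is the passage to the operator norm via the equivalence above together with the Bhatia--Kittaneh inequality; once that is in place, the only genuine computation is the uniform bound $P+MmQ^{-1}\le(M+m)I$, and the appearance of $(M+m)^2/4Mm=\mathrm{K}(h)$ is then automatic.
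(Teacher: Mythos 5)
Your proof is correct, but there is nothing in the paper to compare it against line by line: this statement is quoted verbatim from the preprint of Hoa, Binh and Toan (reference \cite{Hoa}) and the paper gives no proof of it. What can be said is that your argument runs on exactly the machinery this paper uses for its own generalizations: the equivalence $P^2\le c^2Q^2\Leftrightarrow\|PQ^{-1}\|\le c$ for $P,Q>0$, the Bhatia--Kittaneh bound $\|XY\|\le\tfrac14\|X+Y\|^2$ (the paper's Lemma \ref{lema1}), and the Kantorovich-type estimate $T+MmT^{-1}\le(M+m)I$ for $0<m\le T\le M$, which is the $\alpha=1$ case of the paper's inequality \eqref{th21a}. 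Your uniform treatment of the four cases is sound: in each case $P$ is dominated by an arithmetic mean and $Q$ dominates a harmonic mean; for \eqref{hoa1} and \eqref{hoa3} you correctly invoke Choi's inequality to pass from $\Phi(A!B)^{-1}$ to $\Phi(A^{-1}\nabla B^{-1})$, while \eqref{hoa2} and \eqref{hoa4} need only antitonicity of inversion since $(\Phi(A)!\Phi(B))^{-1}=\Phi(A)^{-1}\nabla\Phi(B)^{-1}$ identically; and the closing steps (positivity giving $\|P+MmQ^{-1}\|\le M+m$, unitality giving $\Phi((M+m)I)=(M+m)I$) are all legitimate. In effect you have supplied a correct, self-contained proof of the cited theorem in precisely the style the present authors use to extend it: their theorems containing \eqref{th21} and \eqref{th231} run this same argument, with the additional lemma $\|X^r+Y^r\|\le\|(X+Y)^r\|$ inserted to push the exponent beyond $2$, which your specialization to exponent $2$ does not require.
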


If we replace the exponent $2$ with $p$ ($0<p<2$), \eqref{hoa1}-\eqref{hoa4} are still true. In this paper,
we will study the case of $p>2$ via parameter $\alpha$.

\section{Kantorovich-type inequalities}

Firstly, we are devoted to obtain a better bound than \eqref{fu1} and \eqref{zhang1}. 
In order to do that, we need two important lemmas.

\begin{lemma}\cite[Lemma 2.1]{Bhatia2}
\label{lema1}
Let $A, B\ge 0$. Then the following inequality holds:
\begin{equation}
\label{lem1}
\|AB\|\le\frac{1}{4}\|A+B\|^2.
\end{equation}
\end{lemma}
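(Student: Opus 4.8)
The plan is to recast \eqref{lem1} in a symmetric form and then confront the operator norm head on. Writing $C=A+B\ge 0$ and $S=\tfrac12(A-B)=S^{*}$, the hypotheses $A,B\ge 0$ are precisely the operator constraints $-\tfrac12 C\le S\le \tfrac12 C$, and an elementary expansion gives
\[
AB=\bigl(\tfrac12C+S\bigr)\bigl(\tfrac12C-S\bigr)=\tfrac14C^{2}-S^{2}+\tfrac12(SC-CS).
\]
Since $\|A+B\|^{2}=\|C^{2}\|$, the assertion is equivalent to $\bigl\|\tfrac14C^{2}-S^{2}+\tfrac12[S,C]\bigr\|\le \tfrac14\|C^{2}\|$, where $[S,C]=SC-CS$. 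First I would record the auxiliary fact $\|A-B\|\le\|A+B\|$ for positive $A,B$ (immediate from $-(A+B)\le A-B\le A+B$), so that $\|S\|\le\tfrac12\|C\|$ and the Hermitian part $H:=\tfrac14C^{2}-S^{2}$ already satisfies $-S^{2}\le H\le \tfrac14C^{2}$ and hence $\|H\|\le\tfrac14\|C\|^{2}$. The entire difficulty is therefore concentrated in the skew\nobreakdash-Hermitian term $K:=\tfrac12[S,C]$.

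To handle the norm of the non-normal operator $T=H+K$, the natural device is to pass to $T^{*}T=(H-K)(H+K)=H^{2}-K^{2}+[H,K]$, using $K^{*}=-K$, so that $\|AB\|^{2}=\|T^{*}T\|$, and to estimate this self-adjoint operator against $\tfrac1{16}\|C\|^{4}I$. Here the sign information is favourable, since $-K^{2}=K^{*}K\ge 0$, and the hope is to absorb it against the gap between $H^{2}$ and $\tfrac1{16}C^{4}$; the key resource I expect to use is the \emph{full} constraint $\pm S\le \tfrac12 C$, rather than the mere scalar bound $\|S\|\le\tfrac12\|C\|$. The case $A=B$, i.e. $S=0$ and $K=0$, produces equality $\|AB\|=\|A^{2}\|=\tfrac14\|A+B\|^{2}$, which identifies the unique extremal configuration and pins down where every estimate in the argument must be tight.

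The main obstacle is precisely the commutator $[S,C]$, and the danger is that the easy arguments all bound the wrong, smaller, quantity. Indeed the Hermitian part of the product obeys $\tfrac12(AB+BA)\le\tfrac14(A+B)^{2}$ trivially, since the difference equals $\tfrac14(A-B)^{2}\ge 0$; this gives $\bigl\|\tfrac12(AB+BA)\bigr\|\le\tfrac14\|A+B\|^{2}$ at once, but it controls only the Hermitian part of $AB$, whereas $\|AB\|$ is strictly larger in general because $AB$ is genuinely non\nobreakdash-normal. For the same reason a crude triangle inequality on $T=H+K$ is fatally lossy, as it discards the cancellation between $-S^{2}$ and the cross term; and the operator arithmetic--geometric mean inequality $\|X^{*}Y\|\le\tfrac12\|XX^{*}+YY^{*}\|$, applied with $X=A$, $Y=B$, delivers only the weaker bound $\|AB\|\le\tfrac12\|A^{2}+B^{2}\|$. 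Thus the crux is to close this factor\nobreakdash-two gap down to the sharp constant $\tfrac14$, which forces the proof to exploit the full operator inequality $\pm(A-B)\le A+B$ and not merely its scalar shadow.
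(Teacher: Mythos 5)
Your submission is a proof plan, not a proof: the inequality \eqref{lem1} is never actually established. The reduction to bounding $T=H+K$ with $C=A+B$, $S=\tfrac12(A-B)$, $H=\tfrac14C^{2}-S^{2}$, $K=\tfrac12(SC-CS)$ is correct, as is the bound $\|H\|\le\tfrac14\|C\|^{2}$ on the Hermitian part; but these steps only recover the easy inequality $\|\tfrac12(AB+BA)\|\le\tfrac14\|A+B\|^{2}$, which you yourself note is insufficient. The proposed continuation via $T^{*}T=H^{2}+K^{*}K+[H,K]$ is where the proof would have to happen, and nothing is proved there: no bound on the commutator $[H,K]$ is given, and no mechanism for absorbing $K^{*}K$ is exhibited. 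Moreover the sign of $K^{*}K\ge0$ is adverse, not ``favourable'': what you need is the upper bound $T^{*}T\le\tfrac{1}{16}\|C\|^{4}I$, and $K^{*}K$ pushes $T^{*}T$ upward, so it must be dominated by the gap $\tfrac{1}{16}\|C\|^{4}-H^{2}$ together with the new commutator $[H,K]$ --- a problem exactly as hard as the one you started with, since the passage from $T$ to $T^{*}T$ is pure algebra and has not yet used the operator constraint $\pm S\le\tfrac12C$ at all. Announcing that ``the crux is to close this factor-two gap'' is a diagnosis of the difficulty, not a resolution of it; the argument stops precisely where the lemma begins.

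Two further remarks. First, the extremal analysis that is supposed to ``pin down where every estimate must be tight'' is incorrect: $A=B$ is not the unique equality case. For instance $A=\operatorname{diag}(1,0)$, $B=\operatorname{diag}(1,1)$ gives $\|AB\|=1=\tfrac14\|A+B\|^{2}$ while $S\neq0$; equality holds for a whole family of commuting pairs, so a proof calibrated to be tight only at $A=B$ is aimed at the wrong target. Second, for context: the paper does not prove this lemma at all --- it quotes it from Bhatia and Kittaneh \cite{Bhatia2}, where it is obtained from the singular-value arithmetic--geometric-mean machinery, a genuinely different mechanism from a Hermitian/skew splitting of $AB$. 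Your own discussion explains why splitting plus the triangle inequality cannot reach the sharp constant $\tfrac14$; the $T^{*}T$ variant you propose does not escape that obstruction, it only relocates it.
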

\label{lema2}
\begin{lemma}\cite[p. 28]{Bhatia}
Let $A, B\ge 0$. Then for $1\le r<+\infty$,
\begin{equation}
\label{lem2}
\|A^r+B^r\|\le\|(A+B)^r\|.
\end{equation}
\end{lemma}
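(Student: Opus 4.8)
The plan is to recast the inequality in a form where the scalar behaviour of $t\mapsto t^r$ can be exploited directly. First I would observe that since $A+B\ge 0$, the spectral mapping theorem gives $\|(A+B)^r\|=\|A+B\|^r$, so the claimed inequality \eqref{lem2} is equivalent to $\|A^r+B^r\|\le\|A+B\|^r$. This reduction matters because it replaces the operator power $(A+B)^r$, which interacts badly with sums, by the scalar quantity $\|A+B\|^r$.

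Next I would normalize. If $A+B=0$ the statement is trivial, so assume $c:=\|A+B\|>0$ and set $\widetilde A=A/c$ and $\widetilde B=B/c$. Then $\|\widetilde A+\widetilde B\|=1$, and since $\widetilde A+\widetilde B\ge 0$ this is the same as $\widetilde A+\widetilde B\le I$. Because $\widetilde A,\widetilde B\ge 0$, writing $\widetilde A=(\widetilde A+\widetilde B)-\widetilde B\le\widetilde A+\widetilde B\le I$ (and symmetrically for $\widetilde B$) yields $0\le\widetilde A\le I$ and $0\le\widetilde B\le I$. The goal then becomes the single bound $\widetilde A^{\,r}+\widetilde B^{\,r}\le I$, which upon rescaling by $c^r$ returns $\|A^r+B^r\|\le c^r=\|A+B\|^r$.

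The key step is the pointwise comparison: for any $T$ with $0\le T\le I$ and any $r\ge 1$, one has $T^r\le T$. This follows from the spectral theorem, since $\mathrm{spec}(T)\subseteq[0,1]$ and the scalar inequality $t^r\le t$ holds on $[0,1]$; as $T$ and $T^r$ are both functions of $T$ they commute, so the scalar inequality lifts to $T-T^r=f(T)\ge 0$ with $f(t)=t-t^r\ge 0$. Applying this to $\widetilde A$ and to $\widetilde B$ and adding gives $\widetilde A^{\,r}+\widetilde B^{\,r}\le\widetilde A+\widetilde B\le I$, which closes the argument after rescaling.

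The point to watch, and the only genuine subtlety, is that one must resist trying to prove the stronger operator inequality $A^r+B^r\le(A+B)^r$: this is the direct analogue of the scalar superadditivity of $t^r$, but it fails for $r>1$ because $t\mapsto t^r$ is not operator monotone there. The passage to the norm together with the normalization is exactly what sidesteps this failure, trading the unavailable operator convexity for the elementary fact that large powers contract a contraction ($T^r\le T$ when $0\le T\le I$). Thus the main obstacle is conceptual, namely selecting the normalization that converts $\|A+B\|$ into the operator bound $A+B\le I$, rather than computational.
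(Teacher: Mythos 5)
Your proof is correct. Note that the paper itself offers no argument for this lemma --- it is quoted from Bhatia's \emph{Positive Definite Matrices} (p.~28) --- so there is no in-paper proof to compare against; your argument is essentially the standard one from that source (going back to Bhatia--Kittaneh). The only cosmetic difference is the normalization: rather than rescaling to $\|\widetilde A+\widetilde B\|=1$ and invoking $T^r\le T$ for $0\le T\le I$, the usual write-up sets $c=\|A+B\|$ and uses the scalar bound $t^r\le c^{r-1}t$ on $[0,c]$ to get $A^r\le c^{r-1}A$ and $B^r\le c^{r-1}B$ directly, whence $A^r+B^r\le c^{r-1}(A+B)$ and $\|A^r+B^r\|\le c^r=\|(A+B)^r\|$; the two devices are interchangeable. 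Each step of yours checks out: $\|(A+B)^r\|=\|A+B\|^r$ by the spectral mapping theorem for the positive operator $A+B$; the reduction $0\le\widetilde A,\widetilde B\le I$ from $\widetilde A+\widetilde B\le I$; and $T^r\le T$ via functional calculus with $f(t)=t-t^r\ge0$ on $[0,1]$. Your closing remark is also well taken --- the operator inequality $A^r+B^r\le(A+B)^r$ genuinely fails for $r>1$ (already for $r=2$, since $AB+BA$ need not be positive), so passing to norms is not a convenience but a necessity.
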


We know that $\|A\|\le 1$ is equivalent to $A^*A\le I$. Using this fact we have the following excellent theorem:
\begin{theorem}
Let $0<m\le A\le M$. Then for every positive unital linear map $\Phi$, $1\le\alpha\le 2$ and $p\ge2\alpha$,
\begin{equation}
\label{th21}
\Phi^p(A^{-1})\le \frac{(m^\alpha+M^\alpha)^{\frac{2p}{\alpha}}}{16m^pM^p}\Phi(A)^{-p}.
\end{equation}
\end{theorem}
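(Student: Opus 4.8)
The plan is to reduce \eqref{th21} to a single scalar-type norm estimate and then feed that estimate through the two lemmas. Write $X=\Phi(A^{-1})$ and $Y=\Phi(A)$, both strictly positive. Since $Y^{-p}>0$, conjugating by $Y^{p/2}$ shows that \eqref{th21} is equivalent to $Y^{p/2}X^pY^{p/2}\le \frac{(m^\alpha+M^\alpha)^{2p/\alpha}}{16m^pM^p}I$; and because $Y^{p/2}X^pY^{p/2}=(Y^{p/2}X^{p/2})(Y^{p/2}X^{p/2})^*$ has norm $\|Y^{p/2}X^{p/2}\|^2$, while a positive operator is $\le cI$ exactly when its norm is $\le c$, the whole theorem is equivalent to the norm bound
\begin{equation}
\label{goal}
\|Y^{p/2}X^{p/2}\|\le\frac{(m^\alpha+M^\alpha)^{p/\alpha}}{4(Mm)^{p/2}}.
\end{equation}

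To attack \eqref{goal} I would first apply the Bhatia--Kittaneh inequality \eqref{lem1} to the positive operators $(Mm)^{-p/4}Y^{p/2}$ and $(Mm)^{p/4}X^{p/2}$, whose product is $Y^{p/2}X^{p/2}$; this gives $\|Y^{p/2}X^{p/2}\|\le\frac{1}{4(Mm)^{p/2}}\|Y^{p/2}+(Mm)^{p/2}X^{p/2}\|^2$. Next, since $p\ge2\alpha$ the exponent $r:=\frac{p}{2\alpha}$ satisfies $r\ge1$, so I write $Y^{p/2}=(Y^\alpha)^r$ and $(Mm)^{p/2}X^{p/2}=((Mm)^\alpha X^\alpha)^r$ and apply \eqref{lem2} to these two positive operators to obtain $\|Y^{p/2}+(Mm)^{p/2}X^{p/2}\|\le\|Y^\alpha+(Mm)^\alpha X^\alpha\|^{r}$. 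Combining the two displays and using $2r=p/\alpha$, the bound \eqref{goal}, and hence the theorem, follows as soon as one proves the reduced estimate
\begin{equation}
\label{reduced}
\|\Phi(A)^\alpha+(Mm)^\alpha\Phi(A^{-1})^\alpha\|\le m^\alpha+M^\alpha.
\end{equation}

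The inequality \eqref{reduced} is the crux, and the main obstacle is that $\Phi(A^{-1})\ne\Phi(A)^{-1}$, so one cannot treat $Y^\alpha$ and $X^\alpha$ as powers of mutually inverse operators. I would break the coupling with the operator Jensen (Choi--Davis) inequality: for $1\le\alpha\le2$ the map $t\mapsto t^\alpha$ is operator convex on $(0,\infty)$, so that $\Phi(A)^\alpha\le\Phi(A^\alpha)$ and $\Phi(A^{-1})^\alpha\le\Phi(A^{-\alpha})$. Hence $\Phi(A)^\alpha+(Mm)^\alpha\Phi(A^{-1})^\alpha\le\Phi\left(A^\alpha+(Mm)^\alpha A^{-\alpha}\right)$. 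Now $m\le A\le M$ gives $m^\alpha\le A^\alpha\le M^\alpha$, so $(A^\alpha-m^\alpha)(A^\alpha-M^\alpha)\le0$ by the functional calculus, i.e. $A^\alpha+(Mm)^\alpha A^{-\alpha}\le(m^\alpha+M^\alpha)I$; applying the unital positive map $\Phi$ yields $\Phi\left(A^\alpha+(Mm)^\alpha A^{-\alpha}\right)\le(m^\alpha+M^\alpha)I$, which is precisely \eqref{reduced}. This route also makes transparent where the hypotheses are spent: $1\le\alpha\le2$ is exactly the range of operator convexity of $t^\alpha$, and $p\ge2\alpha$ is exactly what keeps $r\ge1$ in \eqref{lem2}. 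Finally, setting $\alpha=1$ and $\alpha=2$ recovers \eqref{fu1} and \eqref{zhang1}, confirming that \eqref{th21} interpolates between them and fills the stated gap $2\le p\le4$.
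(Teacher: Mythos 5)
Your proposal is correct and is essentially the paper's own proof: the same reduction of \eqref{th21} to the norm bound $\|\Phi^{p/2}(A)\Phi^{p/2}(A^{-1})\|\le\frac{(m^\alpha+M^\alpha)^{p/\alpha}}{4(Mm)^{p/2}}$, the same applications of Lemma 2.1 and Lemma 2.2 with $r=p/(2\alpha)$, and the same key estimate $\Phi(A)^{\alpha}+(Mm)^{\alpha}\Phi(A^{-1})^{\alpha}\le(m^{\alpha}+M^{\alpha})I$ obtained from the scalar bound $A^{\alpha}+(Mm)^{\alpha}A^{-\alpha}\le(m^{\alpha}+M^{\alpha})I$ together with the Choi--Davis--Jensen inequality $\Phi(T)^{\alpha}\le\Phi(T^{\alpha})$. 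The only difference is cosmetic: you apply the Jensen step before pushing $\Phi$ through the scalar bound, whereas the paper applies $\Phi$ first and invokes $\Phi^{\alpha}(T)\le\Phi(T^{\alpha})$ afterwards.
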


\begin{proof}
The desired inequality is equivalent to
\[\|\Phi^{\frac{p}{2}}(A^{-1})\Phi^{\frac{p}{2}}(A)\|\le \frac{(m^\alpha+M^\alpha)^{\frac{p}{\alpha}}}{4m^{\frac{p}{2}}M^{\frac{p}{2}}}.\]
Compute
\begin{align*}
\|m^{\frac{p}{2}}M^{\frac{p}{2}}\Phi^{\frac{p}{2}}(A^{-1})\Phi^{\frac{p}{2}}(A)\|
&\le\frac{1}{4}\|m^{\frac{p}{2}}M^{\frac{p}{2}}\Phi^{\frac{p}{2}}(A^{-1})+\Phi^{\frac{p}{2}}(A)\|^2\quad (\mathrm{by}~(2.1))\\
&\le \frac{1}{4}\|\left(m^{\alpha}M^{\alpha}\Phi^{\alpha}(A^{-1})+\Phi^{\alpha}(A)\right)^{\frac{p}{2\alpha}}\|^2 \quad (\mathrm{by}~(2.2))\\
&=\frac{1}{4}\|m^{\alpha}M^{\alpha}\Phi^{\alpha}(A^{-1})+\Phi^{\alpha}(A)\|^{\frac{p}{\alpha}}\\
&\le\frac{1}{4}(m^{\alpha}+M^{\alpha})^{\frac{p}{\alpha}}.
\end{align*}
The last inequality above holds as follows:
The condition $0<m\le A\le M$ implies that
\begin{equation}
\label{th21a}
M^{\alpha}m^{\alpha}A^{-\alpha}+A^{\alpha}\le M^{\alpha}+m^{\alpha},
\end{equation}
and hence
\begin{equation}
\label{th211}
M^{\alpha}m^{\alpha}\Phi(A^{-\alpha})+\Phi(A^{\alpha})\le M^{\alpha}+m^{\alpha},
\end{equation}
The well-known inequality (see \cite[p. 53]{Bhatia}) says
\[\Phi^{\alpha}(T)\le\Phi(T^{\alpha})\] for every positive unital linear map $\Phi$ and $T>0$.
Then it follows from \eqref{th211} that
\[
M^{\alpha}m^{\alpha}\Phi^{\alpha}(A^{-1})+\Phi^{\alpha}(A)\le M^{\alpha}+m^{\alpha}.
\]
Therefore
\begin{equation}
\label{th213}
\|\Phi^{\frac{p}{2}}(A^{-1})\Phi^{\frac{p}{2}}(A)\|\le \frac{(m^\alpha+M^\alpha)^{\frac{p}{\alpha}}}{4m^{\frac{p}{2}}M^{\frac{p}{2}}}.
\end{equation}
So the inequality \eqref{th21} has been obtained.
\end{proof}

\begin{remark}\label{rem24}
Inequalities \eqref{fu1} and \eqref{zhang1} are two special cases of Theorem \ref{th21} by taking $\alpha=1,2$.

Put $\alpha=2$ and $p=4$, the inequlaity \eqref{th211} reduces to Lin's result (see \cite[Theorem 4.3]{Lin1}).

Our argument depends essentially on a result of Hardy, Little-wood and P\'{o}lya (see \cite{Hardy}, p. 28) that the function $f(\alpha)=\left(M^{\alpha}+m^{\alpha}\right)^{\frac{1}{\alpha}}$ ($M, m>0$  and  $\alpha> 0$) is monotone decreasing,
so we can conclude that Theorem \ref{th21} is sharper than \eqref{fu1} as $p>2$.
\end{remark}

We next present the generalizations of \eqref{lin1} and \eqref{lin2}. The following 
lemma is useful in our derivative of Theorem \ref{thm22}.
\begin{lemma}\label{lem4}
For any bounded operator $X$,
\[|X|\le tI\Leftrightarrow \|X\|\le t\Leftrightarrow \left[
  \begin{array}{cc}
    tI  & X  \\
    X^* & tI \\
  \end{array}
\right]\ge 0  ~(t\ge0).
\]
\end{lemma}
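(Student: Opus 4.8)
The plan is to prove the two equivalences separately, using $\|X\|\le t$ as the central hub: first $|X|\le tI\Leftrightarrow\|X\|\le t$, then $\|X\|\le t\Leftrightarrow M\ge 0$, where $M$ denotes the $2\times 2$ block matrix appearing in the statement.

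For the first equivalence I would exploit that $|X|=(X^*X)^{1/2}$ is a positive operator. The $C^*$-identity gives $\||X|\|^2=\||X|^2\|=\|X^*X\|=\|X\|^2$, so $\||X|\|=\|X\|$. Since for any positive operator $P$ one has $\|P\|=\sup_{\|\xi\|=1}\langle P\xi,\xi\rangle$, the bound $P\le tI$ holds precisely when $\|P\|\le t$ (for $t\ge 0$). Applying this to $P=|X|$ and substituting $\||X|\|=\|X\|$ yields $|X|\le tI\Leftrightarrow\|X\|\le t$ at once.

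For the second equivalence I would argue directly with the quadratic form. Writing $\xi=x\oplus y\in\mathcal{H}\oplus\mathcal{H}$ and expanding, one gets
\[
\langle M\xi,\xi\rangle=t\|x\|^2+t\|y\|^2+2\,\mathrm{Re}\,\langle Xy,x\rangle .
\]
For the forward direction, assuming $\|X\|\le t$, I would bound $2\,\mathrm{Re}\,\langle Xy,x\rangle\ge-2\|X\|\,\|x\|\,\|y\|$ and then apply the arithmetic--geometric mean inequality to obtain $2\|X\|\,\|x\|\,\|y\|\le t(\|x\|^2+\|y\|^2)$, whence $\langle M\xi,\xi\rangle\ge 0$ and $M\ge 0$.

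The reverse direction is the one requiring a little care and is the main (minor) obstacle: from $M\ge 0$ I must recover the norm bound. The device is to choose a good test vector. For fixed $y$ with $Xy\neq 0$, take $x=-s\,Xy/\|Xy\|$ with $s>0$; positivity of the form then reads $ts^2-2s\|Xy\|+t\|y\|^2\ge 0$ for all $s>0$, and minimizing the left-hand side over $s$ forces $\|Xy\|^2\le t^2\|y\|^2$, i.e.\ $\|Xy\|\le t\|y\|$ for every $y$, which is exactly $\|X\|\le t$. The degenerate case $t=0$ I would dispatch separately: then $M\ge 0$ forces $\mathrm{Re}\,\langle Xy,x\rangle=0$ for all $x,y$, and replacing $x$ by $ix$ annihilates the imaginary part as well, so $X=0=tI$. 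Chaining the two equivalences completes the proof.
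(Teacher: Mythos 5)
Your proof is correct; note, however, that the paper itself gives \emph{no} proof of this lemma --- it is stated as a known fact and immediately put to use in Theorem \ref{thm22}, so there is no internal argument to compare against, and your write-up supplies what the authors left to the reader. Your structure is sound: the $C^*$-identity $\||X|\|^2=\|X^*X\|=\|X\|^2$ plus the fact that a positive operator $P$ satisfies $P\le tI$ iff $\|P\|\le t$ gives the first equivalence, and the quadratic-form expansion with the test vector $x=-s\,Xy/\|Xy\|$ (together with the separate treatment of $t=0$, where the minimizer $s=\|Xy\|/t$ would not exist) gives the second. For comparison, the standard textbook route to the block-matrix equivalence is via the Schur complement: for $t>0$, $\left[\begin{smallmatrix} tI & X\\ X^* & tI \end{smallmatrix}\right]\ge 0$ iff $X^*(tI)^{-1}X\le tI$, i.e. $X^*X\le t^2I$, i.e. $|X|\le tI$ by operator monotonicity of the square root; this links the two outer statements in one stroke and picks up the norm characterization for free. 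Your argument trades that machinery for elementary estimates --- no Schur complements, no operator monotonicity --- at the cost of a short case analysis and a one-variable minimization; both are legitimate, and your degenerate-case handling (replacing $x$ by $ix$ to kill the imaginary part when $t=0$) closes the only spot where the main argument does not apply.
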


\begin{theorem}\label{thm22}
Let $A$ be a positive operator on a Hilbert space $\mathcal{H}$ with $0<m\le A\le M$ and $\Phi$
be a $2$-positive linear map on $\mathcal{B(H)}$. Then for $1\le\alpha\le 2$ and $p\ge \alpha$,
\begin{equation}
\label{th22}
|\Phi^p(A^{-1})\Phi^p(A)+\Phi^p(A)\Phi^p(A^{-1})|\le \frac{(M^\alpha+m^\alpha)^{\frac{2p}{\alpha}}}{2M^pm^p}
\end{equation}
and
\begin{equation}
\label{th221}
\Phi^p(A^{-1})\Phi^p(A)+\Phi^p(A)\Phi^p(A^{-1})\le \frac{(M^\alpha+m^\alpha)^{\frac{2p}{\alpha}}}{2M^pm^p}.
\end{equation}
\end{theorem}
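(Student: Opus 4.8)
The plan is to establish the sharper, two-sided statement \eqref{th22} first and then read off \eqref{th221} as its upper half. Write $Y=\Phi^p(A^{-1})\Phi^p(A)+\Phi^p(A)\Phi^p(A^{-1})$. Since a positive linear map preserves adjoints, $\Phi^p(A^{-1})$ and $\Phi^p(A)$ are positive, so with $B=\Phi^p(A^{-1})$ and $D=\Phi^p(A)$ we have $Y=BD+(BD)^{*}$, which is self-adjoint. By Lemma~\ref{lem4}, the assertion $|Y|\le tI$ with $t=\frac{(M^\alpha+m^\alpha)^{2p/\alpha}}{2M^pm^p}$ is equivalent to the norm bound $\|Y\|\le t$; once this is proved, $Y\le tI$ in \eqref{th221} is immediate. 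I would therefore set $P=M^pm^p\,\Phi^p(A^{-1})$ and $Q=\Phi^p(A)$, both positive, so that $M^pm^p\,Y=PQ+QP$, and reduce the whole problem to controlling $PQ+QP$.

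The engine of the argument is the norm estimate $\|P+Q\|\le(M^\alpha+m^\alpha)^{p/\alpha}=:s$. To obtain it I would start from the operator inequality \eqref{th21a}, apply $\Phi$ to reach \eqref{th211}, and then invoke the convexity inequality $\Phi^\alpha(T)\le\Phi(T^\alpha)$ (valid for $1\le\alpha\le 2$) to upgrade \eqref{th211} to $M^\alpha m^\alpha\,\Phi^\alpha(A^{-1})+\Phi^\alpha(A)\le(M^\alpha+m^\alpha)I$. Because $p\ge\alpha$, the exponent $r:=p/\alpha$ satisfies $r\ge 1$, and since $P=\bigl(M^\alpha m^\alpha\Phi^\alpha(A^{-1})\bigr)^{r}$ and $Q=\bigl(\Phi^\alpha(A)\bigr)^{r}$, the norm inequality \eqref{lem2} gives $\|P+Q\|\le\bigl\|\,(M^\alpha m^\alpha\Phi^\alpha(A^{-1})+\Phi^\alpha(A))^{r}\,\bigr\|=\|M^\alpha m^\alpha\Phi^\alpha(A^{-1})+\Phi^\alpha(A)\|^{\,r}\le(M^\alpha+m^\alpha)^{r}=s$. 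In particular $0\le P+Q\le sI$, and hence also $0\le P\le sI$ and $0\le Q\le sI$.

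With $P+Q\le sI$ secured I would exploit the identity $PQ+QP=\tfrac12\bigl[(P+Q)^2-(P-Q)^2\bigr]$. The bounds $0\le(P+Q)^2\le s^2I$ give the upper estimate $PQ+QP\le\tfrac12 s^2 I$; since $0\le P,Q\le sI$ forces $-sI\le P-Q\le sI$, so that $0\le(P-Q)^2\le s^2I$, the lower estimate $PQ+QP\ge-\tfrac12 s^2 I$ follows. Thus $\|PQ+QP\|\le\tfrac12 s^2$, and dividing through by $M^pm^p$ gives $\|Y\|\le\frac{s^2}{2M^pm^p}=t$, which is \eqref{th22} by Lemma~\ref{lem4}, with \eqref{th221} its one-sided consequence.

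The step I expect to be the main obstacle is the passage from the $\alpha$-th power inequality to the $p$-th power one: one cannot simply raise $M^\alpha m^\alpha\Phi^\alpha(A^{-1})+\Phi^\alpha(A)\le(M^\alpha+m^\alpha)I$ to the power $r=p/\alpha$, because $t\mapsto t^{r}$ is not operator monotone when $r>1$. The function of \eqref{lem2} is exactly to carry out this transfer at the level of the operator norm, after which positivity of $P+Q$ restores the operator inequality $P+Q\le sI$. The only other delicate point is the lower bound for the anticommutator needed for the absolute-value form \eqref{th22}; this is why I retain the $(P-Q)^2$ term and use the two-sided control $-sI\le P-Q\le sI$ rather than discarding it.
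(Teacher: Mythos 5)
Your proposal is correct, and it reaches the stated constant by a route whose central step is genuinely different from the paper's. The paper does not re-derive the estimate $\|P+Q\|\le s$ inside this proof at all: it simply invokes the norm inequality \eqref{th213} from Theorem \ref{th21} with $p$ replaced by $2p$ (this is why the weaker hypothesis $p\ge\alpha$ suffices here), i.e. $\|\Phi^{p}(A^{-1})\Phi^{p}(A)\|\le \frac{(M^\alpha+m^\alpha)^{2p/\alpha}}{4M^pm^p}$, then uses Lemma \ref{lem4} to turn this product bound into two positive $2\times 2$ operator matrices with off-diagonal entries $\Phi^p(A)\Phi^p(A^{-1})$ and $\Phi^p(A^{-1})\Phi^p(A)$, sums them, and applies Lemma \ref{lem4} once more to obtain \eqref{th22}; the one-sided bound \eqref{th221} then follows from self-adjointness. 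You instead rebuild the key estimate $\|P+Q\|\le s$ from scratch (using the same ingredients the paper buried in the proof of Theorem \ref{th21}: \eqref{th21a}, \eqref{th211}, the Choi--Davis--Jensen inequality $\Phi^\alpha(T)\le\Phi(T^\alpha)$, and \eqref{lem2}), and then bound the anticommutator directly through the identity $PQ+QP=\tfrac12\bigl[(P+Q)^2-(P-Q)^2\bigr]$ together with $0\le (P\pm Q)^2\le s^2I$, never touching the Bhatia--Kittaneh inequality \eqref{lem1} or any block-matrix argument. What the paper's route buys is brevity, since \eqref{th213} is already available; what yours buys is self-containedness and transparency: your middle step is in effect a direct proof of the anticommutator analogue $\|PQ+QP\|\le\tfrac12\|P+Q\|^2$ of \eqref{lem1}, and it delivers the two-sided bound $-tI\le Y\le tI$ from which \eqref{th22} and \eqref{th221} drop out simultaneously (the paper instead appeals, somewhat opaquely, to the maximal characterization of the geometric mean for \eqref{th221}). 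Two shared caveats, neither of which is your fault: like the paper's own argument, yours never uses $2$-positivity of $\Phi$ (plain positivity suffices), and both arguments do require $\Phi$ to be unital --- for \eqref{th211} and for $\Phi^\alpha(T)\le\Phi(T^\alpha)$ --- a hypothesis the theorem statement omits but which is implicit throughout the paper.
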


\begin{proof}
By \eqref{th213} and Lemma \ref{lem4}, we deduce
\[\left[
  \begin{array}{cc}
   \frac{(M^\alpha+m^\alpha)^{\frac{2p}{\alpha}}}{4M^pm^p}I  & \Phi^p(A)\Phi^p(A^{-1}) \\
    \Phi^p(A^{-1})\Phi^p(A) & \frac{(M^\alpha+m^\alpha)^{\frac{2p}{\alpha}}}{4M^pm^p}I \\
  \end{array}
\right]\ge 0
\]
and
\[\left[
  \begin{array}{cc}
   \frac{(M^\alpha+m^\alpha)^{\frac{2p}{\alpha}}}{4M^pm^p}I  & \Phi^p(A^{-1})\Phi^p(A) \\
     \Phi^p(A)\Phi^p(A^{-1}) & \frac{(M^\alpha+m^\alpha)^{\frac{2p}{\alpha}}}{4M^pm^p}I \\
  \end{array}
\right]\ge 0.
\]
Summing up these two operator matrices, we have
\[\left[
  \begin{array}{cc}
   \frac{(M^\alpha+m^\alpha)^{\frac{2p}{\alpha}}}{2M^pm^p}I  & \Phi^p(A)\Phi^p(A^{-1})+\Phi^p(A^{-1})\Phi^p(A) \\
     \Phi^p(A^{-1})\Phi^p(A)+\Phi^p(A)\Phi^p(A^{-1}) & \frac{(M^\alpha+m^\alpha)^{\frac{2p}{\alpha}}}{2M^pm^p}I \\
  \end{array}
\right]\ge 0.
\]
By Lemma \ref{lem4} again, we obtain \eqref{th22}.

As $\Phi^p(A)\Phi^p(A^{-1})+\Phi^p(A^{-1})\Phi^p(A)$ is self-adjoint, \eqref{th221} follows from
 the maximal characterization of geometric mean.
\end{proof}

\begin{remark}
Take $\alpha=1$ and $p=1$, \eqref{th22} and \eqref{th221} collapse to \eqref{lin1} and \eqref{lin2}, respectively.

Fu showed a special case of Theorem \ref{thm22} as $\alpha=1$ in \cite[Theorem 4]{Fu2}.
\end{remark}

When $\alpha=2$, Theorem \ref{thm22} imply the following.
\begin{corollary}
Let $A$ be a positive operator on a Hilbert space $\mathcal{H}$ with $0<m\le A\le M$ and $\Phi$
be a $2$-positive linear map on $\mathcal{B(H)}$. Then for $p\ge 2$ ,
\begin{equation}
\label{cor11}
|\Phi^p(A^{-1})\Phi^p(A)+\Phi^p(A)\Phi^p(A^{-1})|\le \frac{(M^2+m^2)^{p}}{2M^pm^p},
\end{equation}
and
\begin{equation}
\label{cor12}
\Phi^p(A^{-1})\Phi^p(A)+\Phi^p(A)\Phi^p(A^{-1})\le \frac{(M^2+m^2)^{p}}{2M^pm^p}.
\end{equation}
\end{corollary}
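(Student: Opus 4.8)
The plan is to obtain both inequalities \eqref{cor11} and \eqref{cor12} as the single specialization $\alpha = 2$ of Theorem \ref{thm22}, since that theorem already furnishes the result for the entire range $1 \le \alpha \le 2$ and $p \ge \alpha$. First I would verify that $\alpha = 2$ is an admissible choice: it sits at the upper endpoint of the permitted interval $[1,2]$, so no boundary restriction is violated, and the accompanying hypothesis $p \ge \alpha$ becomes exactly the stated requirement $p \ge 2$. The operator hypotheses $0 < m \le A \le M$ together with the $2$-positivity of $\Phi$ transfer verbatim from the theorem to the corollary.

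The only computation is the collapse of the exponent in the right-hand constant. Setting $\alpha = 2$ gives $\frac{2p}{\alpha} = p$, whence
\[
\frac{(M^\alpha + m^\alpha)^{\frac{2p}{\alpha}}}{2M^p m^p} = \frac{(M^2 + m^2)^{p}}{2M^p m^p},
\]
which is precisely the bound appearing on the right of both \eqref{cor11} and \eqref{cor12}. Inserting this simplified constant into the two conclusions of Theorem \ref{thm22}, namely the norm estimate and the self-adjoint operator estimate, yields \eqref{cor11} and \eqref{cor12} at once.

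There is essentially no analytic obstacle here: the corollary is a pure instantiation and requires no input beyond Theorem \ref{thm22} itself. The single point deserving a moment's attention is the interplay of the two parameter constraints—one must confirm that fixing $\alpha$ at the endpoint value $2$ does not narrow the admissible $p$-range below the advertised $p \ge 2$. Since the condition $p \ge \alpha$ evaluates to $p \ge 2$ at $\alpha = 2$, the full stated range is recovered, and the proof is complete.
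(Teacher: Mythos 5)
Your proposal is correct and takes essentially the same route as the paper: the paper presents this corollary precisely as the $\alpha=2$ instance of Theorem \ref{thm22}, with the exponent collapsing as $\frac{2p}{\alpha}=p$ to give the constant $\frac{(M^2+m^2)^p}{2M^pm^p}$ and the condition $p\ge\alpha$ becoming $p\ge 2$, exactly as you argue.
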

\begin{remark}
When $p\ge 2$, the inequalities \eqref{cor11} and \eqref{cor12} is tighter than 
that of Fu \cite[Theorem 4]{Fu2}, respectively.
\end{remark}

Using the similar idea of Zhang \cite{Zhang}, we deduce the following theorem about operator means.
Firstly, we point out that there is a gap in the proof of Theorem 2.6 (see \cite{Zhang}). 
Author get the last inequality
\begin{equation}
\label{zhanga}
\frac{1}{4}\|k\Phi^{2}(\frac{A+B}{2})+km^2 M^2\Phi^{-2}(\frac{A+B}{2})\|^{\frac{p}{2}}
\le\frac{1}{4}(k(m^2+M^2))^{\frac{p}{2}}
\end{equation}
by \[M^{2}m^{2}\Phi^{2}(A^{-1})+\Phi^{2}(A)\le M^{2}+m^{2},\]
see \cite[(4.7)]{Lin1}. However, by the inequality above, we can not get
\[M^{2}m^{2}\Phi^{-2}(A)+\Phi^{2}(A)\le M^{2}+m^{2}.\]
Actually, using the inequality
\[M^{2}m^{2}A^{-2}+A^{2}\le M^{2}+m^{2},\]
see \cite{Lin1}, we can obtain \eqref{zhanga} directly.
\begin{theorem}
Let $\sigma$ and $\tau$ be two arbitrary means between harmonic and arithmetic mean. 
Let $0<m\le A, B\le M$, $1<\alpha\le 2$ and $p\ge2\alpha$. Then for every positive unital linear map $\Phi$,
\begin{equation}
\label{th231}
\Phi^p(A\sigma B)\le \frac{(k^{\frac{\alpha}{2}}(M^\alpha+m^\alpha))^{\frac{2p}{\alpha}}}{16M^pm^p}\Phi^p(A\tau B),
\end{equation}
\begin{equation}
\label{th232}
\Phi^p(A\sigma B)\le \frac{(k^{\frac{\alpha}{2}}(M^\alpha+m^\alpha))^{\frac{2p}{\alpha}}}{16M^pm^p}(\Phi(A)\tau\Phi(B))^p,
\end{equation}
\begin{equation}
\label{th233}
(\Phi(A)\sigma \Phi(B))^p\le \frac{(k^{\frac{\alpha}{2}}(M^\alpha+m^\alpha))^{\frac{2p}{\alpha}}}{16M^pm^p}\Phi^p(A\tau B),
\end{equation}
and
\begin{equation}
\label{th234}
(\Phi(A)\sigma \Phi(B))^p\le \frac{(k^{\frac{\alpha}{2}}(M^\alpha+m^\alpha))^{\frac{2p}{\alpha}}}{16M^pm^p}(\Phi(A)\tau\Phi(B))^p,
\end{equation}
where $k=\mathrm{K}(h)=\frac{(h+1)^2}{4h}$ with $h=\frac{M}{m}$ is the Kantorovich constant.
\end{theorem}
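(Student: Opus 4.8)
The plan is to mirror the proof of the first theorem of this section (inequality \eqref{th21}), pushing the same scheme from a single operator to a pair of operator means, with the Kantorovich constant entering through the comparison of $\sigma$ and $\tau$ with the extreme means $\nabla$ and $!$. I will treat \eqref{th231} as the representative case.

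First I would reduce \eqref{th231} to a norm inequality. Writing $S_1=\Phi(A\sigma B)$ and $S_2=\Phi(A\tau B)$, both satisfy $m\le S_1,S_2\le M$, because every mean lying between $!$ and $\nabla$ inherits the bounds $m\le A,B\le M$ and $\Phi$ is unital positive; in particular $S_2$ is invertible. Exactly as in the passage leading to \eqref{th213}, the inequality \eqref{th231} is equivalent to
\[
\big\|S_1^{p/2}S_2^{-p/2}\big\|\le\frac{\big(k^{\alpha/2}(M^\alpha+m^\alpha)\big)^{p/\alpha}}{4M^{p/2}m^{p/2}}.
\]
I would then apply \eqref{lem1} to the positive operators $S_1^{p/2}$ and $(Mm)^{p/2}S_2^{-p/2}$, and afterwards \eqref{lem2} with $r=\tfrac{p}{2\alpha}\ge 1$ (legitimate since $p\ge 2\alpha$), writing $S_1^{p/2}=(S_1^\alpha)^r$ and $(Mm)^{p/2}S_2^{-p/2}=\big((Mm)^\alpha S_2^{-\alpha}\big)^r$. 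This collapses the entire estimate to the single operator Kantorovich-type bound
\[
S_1^\alpha+(Mm)^\alpha S_2^{-\alpha}\le k^{\alpha/2}(M^\alpha+m^\alpha)I.\qquad (\ast)
\]

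The heart of the matter is $(\ast)$, and this is where the Kantorovich constant is produced. Since $\sigma,\tau$ lie between $!$ and $\nabla$ we have $A\sigma B\le A\nabla B$ and $A\tau B\ge A!B$, while the reverse arithmetic--harmonic inequality gives $A\nabla B\le \mathrm{K}(h)(A!B)$; applying the positive map $\Phi$ turns these into operator relations between $S_1$, $S_2$ and the single arithmetic-mean operator $W:=\Phi\!\big(\tfrac{A+B}{2}\big)$, which again satisfies $m\le W\le M$, namely $S_1\le W$ and $S_2\ge k^{-1}W$. The idea is to combine these comparisons with the inequality $\Phi^\alpha(T)\le\Phi(T^\alpha)$ for $1\le\alpha\le 2$ (already invoked for \eqref{th213}) and with operator-monotone operations of order $\pm 1$ only, so as to bound both summands of $(\ast)$ in terms of powers of the single operator $W$; one then finishes by reading the scalar Kantorovich inequality $(Mm)^\alpha t^{-\alpha}+t^\alpha\le M^\alpha+m^\alpha$ on $[m,M]$ through the functional calculus of $W$ --- exactly the device the authors use to repair the gap in \eqref{zhanga}.

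The main obstacle is precisely the exponent $\alpha\in(1,2]$: because $t\mapsto t^\alpha$ and $t\mapsto t^{-\alpha}$ are not operator monotone for $\alpha>1$, one cannot simply raise the mean comparisons $S_1\le W$ and $S_2\ge k^{-1}W$ to the $\alpha$-th power, and the crude surrogates (norm estimates, or replacing each summand by its chord on $[m,M]$) turn out to be too lossy to reach the sharp constant $k^{\alpha/2}(M^\alpha+m^\alpha)$. The delicate point is therefore to carry out the reduction to $W$ while keeping $S_1$ and $S_2$ tied to the \emph{same} pair $A,B$, so that their extremal behaviour remains correlated as in the sharp configuration $\{A,B\}=\{m,M\}$ (where $(\ast)$ is an equality up to the factor $k^{\alpha/2}$); this is what makes the inverse--duality $(A!B)^{-1}=A^{-1}\nabla B^{-1}$, rather than two independent bounds, the right tool. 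Once $(\ast)$ is established, \eqref{th231} follows, and \eqref{th232}--\eqref{th234} are obtained by the identical argument after replacing the inner mean by its outer counterpart and invoking the standard inequality $\Phi(A\sigma B)\le\Phi(A)\sigma\Phi(B)$ together with $m\le\Phi(A),\Phi(B)\le M$.
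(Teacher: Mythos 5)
Your reduction steps (the norm-inequality reformulation, Lemma \ref{lema1}, and Lemma \eqref{lem2} with $r=p/(2\alpha)$) are sound, but you apply the AM--GM norm lemma to the \emph{unweighted} pair $S_1^{p/2}$ and $(Mm)^{p/2}S_2^{-p/2}$, which forces you to prove
\[
(\ast)\qquad S_1^{\alpha}+(Mm)^{\alpha}S_2^{-\alpha}\le k^{\alpha/2}(M^{\alpha}+m^{\alpha})I,
\]
and this you never do: your last two paragraphs describe the obstruction (non-operator-monotonicity of $t^{\pm\alpha}$ for $\alpha>1$) and gesture at the inverse-duality of means, but no proof of $(\ast)$ is given. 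This is a genuine gap, and it is not a removable one within your scheme: the only tool available for mixing $S_1$ and $S_2$ is \eqref{th235}, i.e.\ $S_2^{-\alpha}\le k^{\alpha}S_1^{-\alpha}$ (which comes from Hoa's inequality at exponent $\alpha$, not from raising $S_1\le kS_2$ to a power), and substituting it into $(\ast)$ leaves you with $S_1^{\alpha}+k^{\alpha}(Mm)^{\alpha}S_1^{-\alpha}$, whose maximum over $m\le S_1\le M$ is $m^{\alpha}+k^{\alpha}M^{\alpha}$; the bound $m^{\alpha}+k^{\alpha}M^{\alpha}\le k^{\alpha/2}(M^{\alpha}+m^{\alpha})$ is equivalent to $k^{\alpha/2}M^{\alpha}\le m^{\alpha}$, which is false. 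The symmetric substitution $S_1^{\alpha}\le k^{\alpha}S_2^{\alpha}$ fails the same way. So every one-sided reduction to a single operator overshoots the constant, exactly as you suspected ("too lossy"), and a two-variable noncommutative argument for $(\ast)$ is nowhere supplied.

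The paper avoids this trap by a small but decisive trick that you dropped: before invoking Lemma \ref{lema1}, it rebalances the two factors as $k^{p/4}\Phi^{p/2}(A\sigma B)$ and $k^{-p/4}(Mm)^{p/2}\Phi^{-p/2}(A\tau B)$ (legitimate, since the scalars cancel in the product whose norm is being estimated). After Lemma \eqref{lem2} the quantity to bound is then
\[
k^{\alpha/2}S_1^{\alpha}+k^{-\alpha/2}(Mm)^{\alpha}S_2^{-\alpha},
\]
and now the substitution \eqref{th235} is lossless: $k^{-\alpha/2}(Mm)^{\alpha}S_2^{-\alpha}\le k^{\alpha/2}(Mm)^{\alpha}S_1^{-\alpha}$, so the whole expression is at most $k^{\alpha/2}\bigl(S_1^{\alpha}+(Mm)^{\alpha}S_1^{-\alpha}\bigr)$, a function of the \emph{single} operator $S_1\in[m,M]$, which the scalar Kantorovich inequality \eqref{th21a} (read through the functional calculus of $S_1=\Phi(A\sigma B)$, the same repair the authors make to Zhang's argument) bounds by $k^{\alpha/2}(M^{\alpha}+m^{\alpha})$. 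In short: the factor $k^{\alpha/2}$ in the constant must be split evenly between the two summands \emph{before} the AM--GM step; carried out that way, your outline becomes the paper's proof, and \eqref{th232}--\eqref{th234} follow identically from \eqref{hoa2}--\eqref{hoa4}.
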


\begin{proof}
It follows from the inequality \eqref{hoa1} that
\begin{equation}
\label{th235}
k^\alpha\Phi^{-\alpha}(A\sigma B)\ge\Phi^{-\alpha}(A\tau B).
\end{equation}
 As we know, the relation \eqref{th231} is equivalent to
 \[
 \Phi^{-\frac{p}{2}}(A\tau B)\Phi^p(A\sigma B)\Phi^{-\frac{p}{2}}(A\tau B)\le 
 \frac{(k^{\frac{\alpha}{2}}(M^\alpha+m^\alpha))^{\frac{2p}{\alpha}}}{16M^pm^p},
\]
or
\[
 \|\Phi^{\frac{p}{2}}(A\sigma B)\Phi^{-\frac{p}{2}}(A\tau B)\|\le 
 \frac{(k^{\frac{\alpha}{2}}(M^\alpha+m^\alpha))^{\frac{p}{\alpha}}}{4M^{\frac{p}{2}}m^{\frac{p}{2}}}.
\]
Combing Lemma \ref{lem1}, Lemma \ref{lem2} with \eqref{th235}, we get
\begin{align*}
\|m^{\frac{p}{2}}M^{\frac{p}{2}}&\Phi^{\frac{p}{2}}(A\sigma B)\Phi^{-\frac{p}{2}}(A\tau B)\|\\
&\le \frac{1}{4}\|k^{\frac{p}{4}}\Phi^{\frac{p}{2}}(A\sigma B)+k^{-\frac{p}{4}}m^{\frac{p}{2}} M^{\frac{p}{2}}\Phi^{-\frac{p}{2}}(A\tau B)\|^2\\
&\le \frac{1}{4}\|\left(k^{\frac{\alpha}{2}}\Phi^{\alpha}(A\sigma B)+k^{-\frac{\alpha}{2}}m^\alpha M^\alpha\Phi^{-\alpha}(A\tau B)\right)^{\frac{p}{2\alpha}}\|^2\\
&=\frac{1}{4}\|k^{\frac{\alpha}{2}}\Phi^{\alpha}(A\sigma B)+k^{-\frac{\alpha}{2}}m^\alpha M^\alpha\Phi^{-\alpha}(A\tau B)\|^{\frac{p}{\alpha}}\\
&\le\frac{1}{4}\|k^{\frac{\alpha}{2}}\Phi^{\alpha}(A\sigma B)+k^{\frac{\alpha}{2}}m^\alpha M^\alpha\Phi^{-\alpha}(A\sigma B)\|^{\frac{p}{\alpha}}\\
&\le\frac{1}{4}k^{\frac{p}{2}}(m^\alpha+M^\alpha)^{\frac{p}{\alpha}}.
\end{align*}
The last inequality follows from \eqref{th21a} and $0<m\le A\sigma B\le M$ that
\[
M^{\alpha}m^{\alpha}\Phi^{-\alpha}(A\sigma B)+\Phi^{\alpha}(A\sigma B)\le M^{\alpha}+m^{\alpha}.
\]
The inequality \eqref{th231} is completed. Similarly, \eqref{th232}-\eqref{th234} 
can be derived from the inequalities \eqref{hoa2}-\eqref{hoa4}, respectively.
\end{proof}

\section{Wielandt-type inequalities}

 In 2000, Bhatia and Davis \cite{Bhatia1} proved an operator Wielandt inequality which states that
if $0<m\le A\le M$ and $X$, $Y$ are two partial isometries on $\mathcal{H}$ whose final spaces are orthogonal to
each other, then for every $2$-positive linear map $\Phi$ on $\mathcal{B(H)}$,
\[
\Phi(X^*AY)\Phi(Y^*AY)^{-1}\Phi(Y^*AX)\le \left(\frac{M-m}{M+m}\right)^2\Phi(X^*AX).
\]
Lin \cite[Conjecture 3.4]{Lin2} conjecture that the following assertion could be true:
\begin{equation}
\label{lin3}
\|\Phi(X^*AY)\Phi(Y^*AY)^{-1}\Phi(Y^*AX)\Phi(X^*AX)^{-1}\|\le \left(\frac{M-m}{M+m}\right)^2.
\end{equation}

Recently, Fu and He \cite{Fu1} attempt to solve the conjecture and get a step closer to the conjecture.
But Gumus \cite {Gumus} obtain a better upper bound to approximate the right side of \eqref{lin3} based on
\begin{equation}
\label{gumus1}
\|\Phi(X^*AY)\Phi(Y^*AY)^{-1}\Phi(Y^*AX)\Phi(X^*AX)^{-1}\|\le \frac{(M-m)^2}{2(M+m)\sqrt{Mm}},
\end{equation}
which is equivalent to
\begin{equation}
\label{gumus2}
\left(\Phi(X^*AY)\Phi(Y^*AY)^{-1}\Phi(Y^*AX)\right)^2\le \frac{(M-m)^4}{4(M+m)^2Mm}\Phi^2(X^*AX).
\end{equation}

Soon after, Zhang \cite{Zhang} proved a generalized inequality
\begin{equation}
\label{zhang}
\begin{split}
&\|\left(\Phi(X^*AY)\Phi(Y^*AY)^{-1}\Phi(Y^*AX)\right)^{\frac{p}{2}}\Phi(X^*AX)^{-\frac{p}{2}}\|\\
&\le \frac{1}{4}\left(\left(\frac{M-m}{M+m}\right)^2M+\frac{1}{m} \right)^p, \quad p\ge 2.
\end{split}
\end{equation}
\begin{equation}
\label{zhang3}
\begin{split}
&\|\left(\Phi(X^*AY)\Phi(Y^*AY)^{-1}\Phi(Y^*AX)\right)^{\frac{p}{2}}\Phi(X^*AX)^{-\frac{p}{2}}\|\\
&\le \left(\frac{M-m}{M+m}\right)^p\left(\frac{M}{m}\right)^{\frac{p}{2}}, \quad p\ge 1.
\end{split}
\end{equation}
Now, Let us give an improvement of \eqref{zhang}.
\begin{theorem}\label{thm31}
Let $0<m\le A\le M$, $X$ and $Y$ be two isometries in $\mathcal{H}$ whose final spaces are orthogonal to
each other and $\Phi$ be a 2-positive linear map on $\mathcal{B(H)}$. Then for $1\le\alpha\le 2$ and $p\ge 2\alpha$,
\begin{equation}
\label{th31}
\begin{split}
&\|\left(\Phi(X^*AY)\Phi(Y^*AY)^{-1}\Phi(Y^*AX)\right)^{\frac{p}{2}}\Phi(X^*AX)^{-\frac{p}{2}}\|\\
&\le \frac{(M-m)^{p}(M^\alpha+m^\alpha)^{\frac{p}{\alpha}}}{2^{2+\frac{p}{2}} M^{\frac{3p}{4}}m^{\frac{3p}{4}}(M+m)^{\frac{p}{2}}}.
\end{split}
\end{equation}
\end{theorem}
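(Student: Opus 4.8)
The plan is to follow the template of the proof of Theorem \ref{th21} --- estimate the norm on the left by pairing Lemma \ref{lem1} with Lemma \ref{lem2} --- but with a new ingredient that collapses the two a priori different operators
\[
C:=\Phi(X^*AY)\Phi(Y^*AY)^{-1}\Phi(Y^*AX),\qquad D:=\Phi(X^*AX)
\]
into the single operator $D$, on which a Kantorovich-type scalar bound can be invoked. The quantity to be bounded is $\|C^{p/2}D^{-p/2}\|$. First I record the elementary bound on $D$: since $X$ is an isometry and $0<m\le A\le M$ we have $mI\le X^*AX\le MI$, hence $mI\le D\le MI$; in particular $D$ is invertible and the scalar reasoning behind \eqref{th21a}, applied to $D$, gives $M^\alpha m^\alpha D^{-\alpha}+D^\alpha\le(M^\alpha+m^\alpha)I$.

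The crux is the passage that links $C$ and $D$. Gumus's inequality \eqref{gumus2} is precisely the operator inequality $C^2\le r^2D^2$ with $r:=\dfrac{(M-m)^2}{2(M+m)\sqrt{Mm}}$. Because $1\le\alpha\le2$, the exponent $\alpha/2$ lies in $[\tfrac12,1]$, so $t\mapsto t^{\alpha/2}$ is operator monotone on $[0,\infty)$; applying it to $C^2\le(rD)^2$ yields
\[
C^\alpha\le r^\alpha D^\alpha,\qquad 1\le\alpha\le2 .
\]
I expect this step to be the main obstacle, and it is where the restriction $\alpha\le2$ is forced: the map $t\mapsto t^\alpha$ is not operator monotone for $\alpha>1$, so one cannot start from the first-power Wielandt bound $C\le wD$. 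The point is that \eqref{gumus2} already sits at exponent $2$, and one only extracts fractional powers $\alpha/2\le1$ of it, which stays inside the operator-monotone range.

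With the bridge $C^\alpha\le r^\alpha D^\alpha$ in hand I run the norm estimate with weights chosen to reassemble the Kantorovich expression for $D$. By Lemma \ref{lem1},
\[
r^{\frac p2}(Mm)^{\frac p2}\,\|C^{\frac p2}D^{-\frac p2}\|\le\tfrac14\big\|C^{\frac p2}+r^{\frac p2}(Mm)^{\frac p2}D^{-\frac p2}\big\|^2 .
\]
Since $p\ge2\alpha$ forces $\tfrac{p}{2\alpha}\ge1$, Lemma \ref{lem2} pulls the exponent out,
\[
\big\|C^{\frac p2}+r^{\frac p2}(Mm)^{\frac p2}D^{-\frac p2}\big\|\le\big\|C^{\alpha}+r^{\alpha}M^{\alpha}m^{\alpha}D^{-\alpha}\big\|^{\frac{p}{2\alpha}},
\]
and then $C^\alpha\le r^\alpha D^\alpha$ together with the bound on $D$ from the first paragraph gives
\[
C^{\alpha}+r^{\alpha}M^{\alpha}m^{\alpha}D^{-\alpha}\le r^{\alpha}\big(D^{\alpha}+M^{\alpha}m^{\alpha}D^{-\alpha}\big)\le r^{\alpha}(M^\alpha+m^\alpha)I .
\]
Chaining these three displays bounds $\|C^{p/2}D^{-p/2}\|$ by $\dfrac{r^{p/2}(M^\alpha+m^\alpha)^{p/\alpha}}{4(Mm)^{p/2}}$.

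Finally I substitute $r^{p/2}=\dfrac{(M-m)^p}{2^{p/2}(M+m)^{p/2}(Mm)^{p/4}}$ and simplify, which turns the last bound into $\dfrac{(M-m)^p(M^\alpha+m^\alpha)^{p/\alpha}}{2^{2+p/2}M^{3p/4}m^{3p/4}(M+m)^{p/2}}$, that is, exactly \eqref{th31}. Apart from the operator-monotonicity step of the second paragraph, everything is the Lemma \ref{lem1}--Lemma \ref{lem2} bookkeeping already used for Theorem \ref{th21}, the only choice being the weight $r^{p/2}(Mm)^{p/2}$, fixed so that the $C$-term and the $D^{-1}$-term combine into $D^{\alpha}+M^{\alpha}m^{\alpha}D^{-\alpha}$.
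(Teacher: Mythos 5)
Your proposal is correct and follows essentially the same route as the paper's own proof: derive $C^\alpha\le r^\alpha D^\alpha$ from Gumus's inequality \eqref{gumus2} (the paper's \eqref{th311}), then run the Lemma \ref{lem1}--Lemma \ref{lem2} estimate with exactly the same weight $r^{p/2}(Mm)^{p/2}$ and close with the Kantorovich-type bound \eqref{th21a} applied to $\Phi(X^*AX)$. The only difference is cosmetic: you make explicit the operator monotonicity of $t\mapsto t^{\alpha/2}$ behind \eqref{th311}, which the paper leaves implicit.
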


\begin{proof}
By using \eqref{gumus2}, we have
\begin{equation}
\label{th311}
\left(\Phi(X^*AY)\Phi(Y^*AY)^{-1}\Phi(Y^*AX)\right)^\alpha\le 
\frac{(M-m)^{2\alpha}}{2^\alpha(M+m)^\alpha M^{\frac{\alpha}{2}}m^{\frac{\alpha}{2}}}\Phi^\alpha(X^*AX).
\end{equation}
Combing Lemma \ref{lem1}, Lemma \ref{lem2} with \eqref{th311}, we get

\begin{align*}
&\left\|\left(\Phi(X^*AY)\Phi(Y^*AY)^{-1}\Phi(Y^*AX)\right)^{\frac{p}{2}}\frac{(M-m)^{p}}{2^{\frac{p}{2}}(M+m)^{\frac{p}{2}}}M^{\frac{p}{4}}m^{\frac{p}{4}}\Phi(X^*AX)^{-\frac{p}{2}}\right\|\\
&\le \frac{1}{4}\left\|\left(\Phi(X^*AY)\Phi(Y^*AY)^{-1}\Phi(Y^*AX)\right)^{\frac{p}{2}}+\left(\frac{(M-m)^2}{2(M+m)}\sqrt{Mm}\Phi(X^*AX)^{-1}\right)^{\frac{p}{2}}\right\|^2\\
&\le \frac{1}{4}\left\|\left(\Phi(X^*AY)\Phi(Y^*AY)^{-1}\Phi(Y^*AX)\right)^\alpha+\frac{(M-m)^{2\alpha}}{2^\alpha(M+m)^\alpha}(M m)^{\frac{\alpha}{2}}\Phi(X^*AX)^{-\alpha}\right\|^{\frac{p}{\alpha}}\\
&\le \frac{1}{4}\left\|\frac{(M-m)^{2\alpha}}{2^\alpha (Mm)^{\frac{\alpha}{2}}(M+m)^{\alpha}}\Phi(X^*AX)^{\alpha}+\frac{(M-m)^{2\alpha}}{2^\alpha(M+m)^\alpha}(M m)^{\frac{\alpha}{2}}\Phi(X^*AX)^{-\alpha}\right\|^{\frac{p}{\alpha}}\\
&=\frac{(M-m)^{2p}}{2^{2+p} M^{\frac{p}{2}}m^{\frac{p}{2}}(M+m)^p}\left\|\Phi(X^*AX)^{\alpha}+M^\alpha m^\alpha\Phi(X^*AX)^{-\alpha}\right\|^{\frac{p}{\alpha}}\\
&\le\frac{(M-m)^{2p}(M^\alpha+m^\alpha)^{\frac{p}{\alpha}}}{2^{2+p} M^{\frac{p}{2}}m^{\frac{p}{2}}(M+m)^p}.
\end{align*}
The last inequality follows from \eqref{th21a} and $0<m\le X^*AX\le M$. This proves the inequality \eqref{th31}.
\end{proof}

Putting $\alpha=1,2$ in Theorem \ref{thm31}, we have
\begin{corollary}
Under the same conditions as in Theorem \ref{thm31}, then
\begin{equation}
\label{cor31}
\begin{split}
&\|\left(\Phi(X^*AY)\Phi(Y^*AY)^{-1}\Phi(Y^*AX)\right)^{\frac{p}{2}}\Phi(X^*AX)^{-\frac{p}{2}}\|\\
&\le \frac{(M-m)^{p}(M+m)^{\frac{p}{2}}}{2^{2+\frac{p}{2}} M^{\frac{3p}{4}}m^{\frac{3p}{4}}},~~~p\ge 2
\end{split}
\end{equation}
and
\begin{equation}
\label{cor32}
\begin{split}
&\|\left(\Phi(X^*AY)\Phi(Y^*AY)^{-1}\Phi(Y^*AX)\right)^{\frac{p}{2}}\Phi(X^*AX)^{-\frac{p}{2}}\|\\
&\le \frac{(M-m)^{p}(M^2+m^2)^{\frac{p}{2}}}{2^{2+\frac{p}{2}} M^{\frac{3p}{4}}m^{\frac{3p}{4}}(M+m)^{\frac{p}{2}}},~~~p\ge 4.
\end{split}
\end{equation}
\end{corollary}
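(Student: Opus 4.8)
The plan is to obtain both inequalities as direct specializations of Theorem~\ref{thm31}, since the hypotheses of the corollary are exactly those of that theorem with the free parameter $\alpha$ set to the two endpoints of its admissible range $[1,2]$. No new estimate is required; the entire task is to substitute $\alpha=1$ and $\alpha=2$ into the bound
\[
\frac{(M-m)^{p}(M^\alpha+m^\alpha)^{\frac{p}{\alpha}}}{2^{2+\frac{p}{2}} M^{\frac{3p}{4}}m^{\frac{3p}{4}}(M+m)^{\frac{p}{2}}}
\]
and to track how the constraint $p\ge 2\alpha$ and the $\alpha$-dependent factor $(M^\alpha+m^\alpha)^{\frac{p}{\alpha}}$ transform under each choice.

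First I would treat $\alpha=1$. Here the admissibility condition $p\ge 2\alpha$ reads $p\ge 2$, matching the range stated in \eqref{cor31}. The only factor in the numerator depending on $\alpha$ is $(M^\alpha+m^\alpha)^{\frac{p}{\alpha}}$, which collapses to $(M+m)^{p}$. Combining this with the factor $(M+m)^{-\frac{p}{2}}$ already sitting in the denominator leaves a net power $(M+m)^{p-\frac{p}{2}}=(M+m)^{\frac{p}{2}}$ in the numerator, while the remaining factors $(M-m)^{p}$ and $2^{2+\frac{p}{2}}M^{\frac{3p}{4}}m^{\frac{3p}{4}}$ are untouched. This reproduces precisely the right-hand side of \eqref{cor31}.

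Next I would set $\alpha=2$. Now the condition $p\ge 2\alpha$ becomes $p\ge 4$, as stated in \eqref{cor32}, and the $\alpha$-dependent factor becomes $(M^2+m^2)^{\frac{p}{2}}$. Since this factor no longer shares a base with the $(M+m)^{\frac{p}{2}}$ appearing in the denominator, no cancellation occurs, and substituting directly yields the right-hand side of \eqref{cor32} verbatim. Both computations are purely algebraic, so I anticipate no genuine obstacle; the one point that demands care is the exponent bookkeeping on the base $(M+m)$, which cancels partially when $\alpha=1$ but not at all when $\alpha=2$, together with the corresponding tightening of the power constraint from $p\ge 2$ to $p\ge 4$.
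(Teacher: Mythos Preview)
Your proposal is correct and matches the paper's approach exactly: the corollary is stated immediately after Theorem~\ref{thm31} with the phrase ``Putting $\alpha=1,2$ in Theorem~\ref{thm31}, we have,'' and no further proof is given. Your bookkeeping of the $(M+m)$-exponent and the constraint $p\ge 2\alpha$ is accurate in both cases.
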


\begin{remark}
\eqref{cor32} is better than \eqref{cor31} as $p\ge 4$.
A simple computation shows that
\[\frac{(M-m)^2(M+m)}{8M^{\frac{3}{2}}m^{\frac{3}{2}}}\le 
\left(\frac{M-m}{M+m}\right)^2\frac{M}{m}\le \frac{1}{4}\left(\left(\frac{M-m}{M+m}\right)^2M+\frac{1}{m} \right)^2,\]
which points out that the right side of \eqref{cor31} is a better bound than that of \eqref{zhang} and \eqref{zhang3} as $p\ge 2$.

Similar to that of Remark \ref{rem24}, by using the function $f(\alpha)=\left(M^{\alpha}+m^{\alpha}\right)^{\frac{1}{\alpha}}$,
we derive that the upper bound in \eqref{th31} is tighter than that in \eqref{cor31} as $p> 2$.
\end{remark}
Following from \eqref{th31} and the line of the prove of Theorem \ref{thm22}, we have
\begin{theorem}
Under the same conditions as in Theorem \ref{thm31}, denote
\[\Gamma=\left(\Phi(X^*AY)\Phi(Y^*AY)^{-1}\Phi(Y^*AX)\right)^{\frac{p}{2}}\Phi(X^*AX)^{-\frac{p}{2}},\] then
\[
|\Gamma+\Gamma^*|\le \frac{(M-m)^{p}(M^\alpha+m^\alpha)^{\frac{p}{\alpha}}}{2^{1+\frac{p}{2}} M^{\frac{3p}{4}}m^{\frac{3p}{4}}(M+m)^{\frac{p}{2}}}
\]
and
\[
\Gamma+\Gamma^*\le \frac{(M-m)^{p}(M^\alpha+m^\alpha)^{\frac{p}{\alpha}}}{2^{1+\frac{p}{2}} M^{\frac{3p}{4}}m^{\frac{3p}{4}}(M+m)^{\frac{p}{2}}}.
\]
\end{theorem}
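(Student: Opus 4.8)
The plan is to follow the proof of Theorem~\ref{thm22} line for line, now using the norm estimate \eqref{th31} of Theorem~\ref{thm31} in place of \eqref{th213}. Abbreviate the right-hand side of \eqref{th31} by
\[
t=\frac{(M-m)^{p}(M^\alpha+m^\alpha)^{\frac{p}{\alpha}}}{2^{2+\frac{p}{2}} M^{\frac{3p}{4}}m^{\frac{3p}{4}}(M+m)^{\frac{p}{2}}},
\]
so that Theorem~\ref{thm31} reads precisely $\|\Gamma\|\le t$. First I would feed this into Lemma~\ref{lem4} with $X=\Gamma$ to rewrite the scalar norm bound as the $2\times2$ positivity statement
\[
\begin{bmatrix} tI & \Gamma \\ \Gamma^* & tI\end{bmatrix}\ge 0 .
\]

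Since the operator norm is $\ast$-invariant, one also has $\|\Gamma^*\|=\|\Gamma\|\le t$, so Lemma~\ref{lem4} applied to $X=\Gamma^*$ gives the companion matrix
\[
\begin{bmatrix} tI & \Gamma^* \\ \Gamma & tI\end{bmatrix}\ge 0
\]
with the two off-diagonal entries interchanged. Adding these two positive semidefinite matrices yields
\[
\begin{bmatrix} 2tI & \Gamma+\Gamma^* \\ \Gamma+\Gamma^* & 2tI\end{bmatrix}\ge 0 ,
\]
and one last application of Lemma~\ref{lem4}, now with $X=\Gamma+\Gamma^*$ and bound $2t$, returns $|\Gamma+\Gamma^*|\le 2tI$. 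Since multiplying $t$ by $2$ collapses the factor $2^{2+p/2}$ to $2^{1+p/2}$, the quantity $2t$ is exactly the asserted right-hand side, which gives the first inequality.

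For the second inequality I would exploit that $\Gamma+\Gamma^*$ is self-adjoint. Then either directly from $\Gamma+\Gamma^*\le|\Gamma+\Gamma^*|\le 2tI$, or, exactly as in the proof of Theorem~\ref{thm22}, by reading the self-adjoint case of the maximal characterization of the geometric mean off the summed matrix above, one concludes $\Gamma+\Gamma^*\le 2tI$, as claimed.

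I do not expect a genuine obstacle: all the analytic work has already been done in establishing \eqref{th31}, and what remains is the standard $2\times2$ dilation argument of Theorem~\ref{thm22}. The only points demanding care are bookkeeping ones, namely verifying that the two companion matrices are arranged so that both off-diagonal entries of their sum coincide with $\Gamma+\Gamma^*$, and checking the elementary identity $2\cdot 2^{-(2+p/2)}=2^{-(1+p/2)}$ that reconciles the constant with the stated bound.
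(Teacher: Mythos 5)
Your proposal is correct and is exactly the argument the paper intends: the paper states this theorem with no written proof beyond ``following from \eqref{th31} and the line of the proof of Theorem \ref{thm22}'', and your write-up fills in precisely that line --- the norm bound \eqref{th31}, two applications of Lemma \ref{lem4}, summation of the two $2\times2$ positive matrices, and self-adjointness of $\Gamma+\Gamma^*$ for the second inequality. The constant bookkeeping ($2\cdot 2^{-(2+p/2)}=2^{-(1+p/2)}$) is also right.
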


\section{Suppliments}
Ando's inequality \cite{Ando} asserts that if $A,B\in \mathcal{B}(\mathcal{H})$ are positive operators, $v\in[0,1]$ and
$\Phi$ is a positive linear map, then
\[
  \Phi(A\sharp_vB)\le \Phi(A)\sharp_v\Phi(B).
\]

\begin{theorem}\label{th41}
Let $0<mI\le A, B\le MI$. Then for every positive unital linear map $\Phi$ and $0<v<1$,
\begin{equation}
\label{th412}
\Phi^2(A\nabla_v B)\le \mathrm{K}^2(h)\Phi^2(A\sharp_vB)
\end{equation}
and
\begin{equation}
\label{th413}
\Phi^2(A\nabla_v B)\le \mathrm{K}^2(h)(\Phi(A)\sharp_v\Phi(B))^2,
\end{equation}
where $\mathrm{K}(h)=\frac{(h+1)^2}{4h}$ with $h=\frac{M}{m}$.
\end{theorem}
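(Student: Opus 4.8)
The plan is to prove \eqref{th412} and \eqref{th413} by the same two-step recipe used throughout Section 2: first convert the operator inequality into a norm bound, then discharge that bound with Lemma \ref{lema1}. Since $\Phi$ is positive and unital, $mI\le\Phi(A\sharp_v B)\le MI$ and $mI\le\Phi(A\nabla_v B)\le MI$, so both operators are invertible and, using $\|X\|\le t\Leftrightarrow X^*X\le t^2I$, the inequality \eqref{th412} is equivalent to $\|\Phi(A\nabla_v B)\,\Phi(A\sharp_v B)^{-1}\|\le\mathrm{K}(h)$. Applying Lemma \ref{lema1} to the positive operators $mM\Phi(A\sharp_v B)^{-1}$ and $\Phi(A\nabla_v B)$ gives $\|mM\Phi(A\sharp_v B)^{-1}\Phi(A\nabla_v B)\|\le\frac14\|mM\Phi(A\sharp_v B)^{-1}+\Phi(A\nabla_v B)\|^2$, so after dividing by $mM$ (the reordering of the two self-adjoint factors being harmless for the norm) the whole problem collapses to the single operator estimate $mM\Phi(A\sharp_v B)^{-1}+\Phi(A\nabla_v B)\le(m+M)I$, which indeed returns the constant $\frac{(m+M)^2}{4mM}=\mathrm{K}(h)$.

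The main obstacle is that the inverse geometric mean $\Phi(A\sharp_v B)^{-1}$ does not decompose additively, unlike $\Phi(A\nabla_v B)=(1-v)\Phi(A)+v\Phi(B)$. I would get around this by replacing the geometric mean by the (smaller) harmonic mean, whose inverse is linear. Applying the Young operator inequality $X\nabla_v Y\ge X\sharp_v Y$ to $X=A^{-1}$, $Y=B^{-1}$ yields $(A\sharp_v B)^{-1}=A^{-1}\sharp_v B^{-1}\le A^{-1}\nabla_v B^{-1}$, and then the operator Jensen (Choi) inequality $\Phi(C)^{-1}\le\Phi(C^{-1})$ together with the linearity of $\Phi$ gives $\Phi(A\sharp_v B)^{-1}\le\Phi\big((A\sharp_v B)^{-1}\big)\le(1-v)\Phi(A^{-1})+v\Phi(B^{-1})$. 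Substituting this and $\Phi(A\nabla_v B)=(1-v)\Phi(A)+v\Phi(B)$ regroups the target sum as the convex combination $(1-v)\big[mM\Phi(A^{-1})+\Phi(A)\big]+v\big[mM\Phi(B^{-1})+\Phi(B)\big]$. Each bracket is $\le(m+M)I$: this is \eqref{th21a} with $\alpha=1$, namely $mMA^{-1}+A\le m+M$ (and the same for $B$), followed by $\Phi$. Hence the sum is $\le(m+M)I$ and \eqref{th412} follows.

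For \eqref{th413} the identical scheme applies, now with $\Phi(A)\sharp_v\Phi(B)$ in place of $\Phi(A\sharp_v B)$. Note that Ando's inequality only gives $\Phi(A\sharp_v B)\le\Phi(A)\sharp_v\Phi(B)$, and since squaring is not operator monotone this does not let \eqref{th413} be read off from \eqref{th412}; a parallel argument is genuinely needed. The reduction gives the equivalent bound $\|\Phi(A\nabla_v B)(\Phi(A)\sharp_v\Phi(B))^{-1}\|\le\mathrm{K}(h)$, and Young applied to $\Phi(A)^{-1},\Phi(B)^{-1}$ yields $(\Phi(A)\sharp_v\Phi(B))^{-1}\le(1-v)\Phi(A)^{-1}+v\Phi(B)^{-1}$ directly, with no Choi step needed. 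Regrouping as before produces $(1-v)\big[mM\Phi(A)^{-1}+\Phi(A)\big]+v\big[mM\Phi(B)^{-1}+\Phi(B)\big]\le(m+M)I$, where each bracket is bounded by $m+M$ because $\Phi(A)$ and $\Phi(B)$ themselves lie in $[mI,MI]$. Feeding this into Lemma \ref{lema1} exactly as above delivers the constant $\mathrm{K}(h)$. I expect the only point requiring care to be the book-keeping of the monotonicity and inversion directions in the harmonic-mean detour; once those are verified, both inequalities are immediate from the Lemma \ref{lema1} computation.
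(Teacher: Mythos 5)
Your proposal is correct and follows essentially the same route as the paper: reduce each squared inequality to a norm bound, apply the Bhatia--Kittaneh lemma $\|PQ\|\le\frac14\|P+Q\|^2$, and verify $\Phi(A\nabla_v B)+mM\,\Phi(A\sharp_v B)^{-1}\le(M+m)I$ via the Choi inequality, the Young inequality $(A\sharp_vB)^{-1}=A^{-1}\sharp_vB^{-1}\le A^{-1}\nabla_v B^{-1}$, and the weighted sum of the two bracket inequalities $mMT^{-1}+T\le(M+m)I$. Your explicit treatment of \eqref{th413} (where the Choi step is unnecessary since $mI\le\Phi(A),\Phi(B)\le MI$ directly) simply fills in what the paper dismisses as ``similar,'' and is the natural way to do so.
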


\begin{proof}
The inequality \eqref{th412} is equivalent to
\begin{equation}
\label{th414}
\|\Phi(A\nabla_v B)\Phi^{-1}(A\sharp_vB)\|\le \mathrm{K}(h).
\end{equation}
By Lemma 2.2, \eqref{th414} is valid if
\begin{equation}
\label{th415}
\Phi(A\nabla_v B)+Mm\Phi^{-1}(A\sharp_vB)\le M+m.
\end{equation}
The well-known Choi inequality (see \cite [p. 41]{Bhatia}) says that
\[\Phi(T^{-1})\ge \Phi^{-1}(T)\quad for~any~T>0,\]
so \eqref{th415} can be derived if
\begin{equation}
\label{th416}
\Phi(A\nabla_v B)+Mm\Phi((A\sharp_vB)^{-1})\le M+m.
\end{equation}
Indeed, we shall show a stronger inequality than \eqref{th416}, it is referred to Lin (see \cite[(2.3)]{Lin2})
\[\Phi(A)+Mm\Phi(A^{-1})\le M+m,\]
\[\Phi(B)+Mm\Phi(B^{-1})\le M+m,\]
summing up the two inequalities above by multiplying $1-v$ and $v$, respectively, we get
\begin{equation}
\label{th417}
\Phi((1-v)A+vB)+Mm\Phi((1-v)A^{-1}+vB^{-1})\le M+m.
\end{equation}
Note that \eqref{th417} is tighter than \eqref{th416}, since $(A!_vB)^{-1}\ge A^{-1}\sharp_vB^{-1} =(A\sharp_vB)^{-1}$ and $\Phi$ is order preserving.
\eqref{th412} is completed. The proof of \eqref{th413} is similar.
\end{proof}

Note that
$\Phi(A\sharp_vB)\le \Phi(A)\sharp_v\Phi(B)$ can not be squared (see, \cite[Proposition 1.2]{Lin1}),
 so we can not say that \eqref{th412} is stronger than \eqref{th413}.
\begin{remark}
In the proof of Theorem \ref{th41}, we can conclude that
\[
\Phi^2(A\nabla_v B)\le \mathrm{K}^2(h)\Phi^2(A!_vB),
\]
and
\[
\Phi^2(A\nabla_v B)\le \mathrm{K}^2(h)(\Phi(A)!_v\Phi(B))^2,
\]
which show that the direct relations of arithmetic mean and harmonic mean.
\end{remark}
Several reverse Young operator inequalities appeared in \cite{Liao, Tominaga} as follows:
\begin{theorem}\cite{Liao, Tominaga}
Let $A,B$ be two invertible positive operators and
satisfy $0<m\le A, B\le M$.
Then
\[
A\nabla _v B \le {\rm K}(h)^R A\# _v B
\]
and
\[
 A\nabla _v B \le S(h) A\# _v B
\]
for all $v \in [0, 1]$, where $R = \max \{1 - v ,v \}$, $h=\frac{M}{m}$ and 
$S(h)= \dfrac{h^{\frac{1}{h - 1}}}{e\ln h^{\frac{1}{h - 1}}}(h\ne
1)$ is the Specht's ratio.
\end{theorem}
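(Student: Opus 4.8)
The plan is to reduce both operator inequalities to a single scalar reverse Young inequality by a congruence transformation, and then to read off the operator statement from the scalar one via the functional calculus. Put $C=A^{-\frac12}BA^{-\frac12}$. The hypotheses $0<m\le A,B\le M$ yield $h^{-1}I\le C\le hI$ with $h=M/m$: from $B\le MI$ and $A^{-1}\le m^{-1}I$ one gets $C\le(M/m)I$, and from $B\ge mI$, $A^{-1}\ge M^{-1}I$ one gets $C\ge(m/M)I$. Since
\[
A\nabla_vB=A^{\frac12}\big((1-v)I+vC\big)A^{\frac12},\qquad A\#_vB=A^{\frac12}C^vA^{\frac12},
\]
and congruence by $A^{-\frac12}$ preserves the operator order (and is invertible), each asserted inequality $A\nabla_vB\le c(h)\,A\#_vB$ is equivalent to
\[
(1-v)I+vC\le c(h)\,C^v,
\]
where $c(h)$ denotes $\mathrm{K}(h)^R$ in the first case and $S(h)$ in the second.

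I would then eliminate the operator by spectral theory. Both sides of the last display are continuous functions of the single positive operator $C$, whose spectrum lies in $[h^{-1},h]$; hence the operator inequality follows once the scalar inequality
\[
(1-v)+vt\le c(h)\,t^{v},\qquad t\in[h^{-1},h],
\]
is verified for $c(h)=\mathrm{K}(h)^R$ and for $c(h)=S(h)$. Writing $g(t)=\big((1-v)+vt\big)t^{-v}$, the task becomes $\max_{t\in[h^{-1},h]}g(t)\le c(h)$.

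The analytic core is this scalar estimate. A direct differentiation gives $g'(t)=v(1-v)(t-1)t^{-v-1}$, so for $v\in(0,1)$ the only critical point is $t=1$, where $g(1)=1$ is the global minimum; consequently $g$ decreases on $[h^{-1},1]$ and increases on $[1,h]$, and its maximum over $[h^{-1},h]$ is attained at an endpoint (the cases $v\in\{0,1\}$ give $g\equiv1$ and are immediate). For the Specht bound this maximum is controlled by the classical Specht ratio inequality $(1-v)a+vb\le S\big(\tfrac{\max\{a,b\}}{\min\{a,b\}}\big)a^{1-v}b^v$ applied to $a=1,\,b=t$, together with the fact that $S$ is increasing on $[1,\infty)$ and satisfies $S(t)=S(t^{-1})$; this yields $g(t)\le S(h)$ throughout $[h^{-1},h]$. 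For the Kantorovich bound one checks directly that the two endpoint values $g(h)=\big((1-v)+vh\big)h^{-v}$ and $g(h^{-1})=(1-v)h^{v}+vh^{v-1}$ do not exceed $\mathrm{K}(h)^{R}$ with $R=\max\{1-v,v\}$.

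The routine part is the reduction; the genuine difficulty is the sharp Kantorovich endpoint estimate $g(h)\le\mathrm{K}(h)^{\max\{1-v,v\}}$, because the exponent $R$ must be chosen exactly and the inequality is subtle for $v\ne\tfrac12$ (at $v=\tfrac12$ both endpoints meet $\mathrm{K}(h)^{1/2}$ with equality, which pins down the constant). I would treat this by taking logarithms and comparing the concave function $v\mapsto\log\big((1-v)+vh\big)-v\log h$ against the piecewise-linear majorant $R\log\mathrm{K}(h)$, reducing the whole matter to a one-variable convexity estimate in $v$ on $[0,1]$; the symmetry $v\leftrightarrow1-v$ then dispatches the companion endpoint $g(h^{-1})$.
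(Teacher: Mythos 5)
First, a point of comparison: the paper does not prove this theorem at all --- it is quoted from \cite{Liao} and \cite{Tominaga} as background for the closing open question --- so your proof can only be measured against those references, and your overall route (congruence reduction plus functional calculus) is in fact the standard one they use. The reduction itself is correct: with $C=A^{-\frac12}BA^{-\frac12}$ one indeed has $h^{-1}I\le C\le hI$, congruence by $A^{\frac12}$ converts each claimed inequality into $(1-v)I+vC\le c(h)\,C^{v}$, and since both sides are functions of the single operator $C$, everything reduces to the scalar inequality $(1-v)+vt\le c(h)t^{v}$ on $[h^{-1},h]$. Your derivative computation $g'(t)=v(1-v)(t-1)t^{-v-1}$ is right, so the maximum of $g$ sits at an endpoint, and the Specht half is complete modulo the classical scalar Specht inequality and the properties $S(t)=S(t^{-1})$, $S$ increasing on $[1,\infty)$ (which is exactly the content of Tominaga's lemma).

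The gap is in the Kantorovich half, precisely where you place the ``genuine difficulty.'' Your plan --- compare the concave function $\phi(v)=\log\bigl((1-v)+vh\bigr)-v\log h$ with the piecewise-linear majorant $\psi(v)=R\log\mathrm{K}(h)$ --- is not yet a proof, and concavity by itself points the wrong way: a concave function lies \emph{above} its chords, so knowing $\phi(\tfrac12)=\psi(\tfrac12)$ and $\phi(0),\phi(1)\le\psi(0),\psi(1)$ does not prevent $\phi$ from crossing above the linear pieces in between. What rescues the plan is the tangent-line form of concavity at the equality point: since $\phi$ is concave and $\phi(\tfrac12)=\tfrac12\log\mathrm{K}(h)$, it suffices to verify the slope condition
\[
\Bigl|\frac{2(h-1)}{h+1}-\log h\Bigr|=\bigl|\phi'(\tfrac12)\bigr|\le\log\mathrm{K}(h)=2\log\frac{h+1}{2}-\log h,
\]
for then $\phi(v)\le\phi(\tfrac12)+\phi'(\tfrac12)(v-\tfrac12)\le\psi(v)$ on each half-interval $[0,\tfrac12]$, $[\tfrac12,1]$. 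This splits into $\frac{2(h-1)}{h+1}\le 2\log\frac{h+1}{2}$, which with $u=\frac{h+1}{2}$ is the standard bound $1-\frac1u\le\log u$, and $\frac{h-1}{h+1}\ge\log\frac{2h}{h+1}$, which with $w=\frac{2h}{h+1}$ is the companion bound $\log w\le w-1$. With this one step supplied, your symmetry observation $g_{1-v}(h)=g_{v}(h^{-1})$ (together with $R(v)=R(1-v)$) correctly transfers the estimate to the other endpoint, and the proof is complete; in substance it then coincides with the scalar argument of \cite{Liao}.
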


It is natural to ask whether the following analogue of \eqref{th412} holds or not. Under
the same condition of Theorem \ref{th41}, are they true
\[
\Phi^2(A\nabla_v B)\le \mathrm{K}^{2R}(h)\Phi^2(A\sharp_vB)
\]
and
\[
\Phi^2(A\nabla_v B)\le S^2(h)\Phi^2(A\sharp_vB)?
\]
 We have been unable to prove (or disprove) them.


\bibliographystyle{amsplain}

\end{document}